\newcommand{\eps}{\varepsilon}
\newcommand{\vhi}{\varphi}
\newcommand{\er}{\mathbb R}
\newcommand{\Ss}{\mathbb S}
\newcommand{\So}{{{\mathbb S}^1}}
\newcommand{\ZZ}{\mathbb Z}
\newcommand{\T}{\mathbb T}
\newcommand{\pat}{\partial_t}
\newcommand{\pax}{\partial_x}
\newcommand{\paxi}{\partial_{\xi}}
\newcommand{\vertiii}[1]{{\left\vert\kern-0.25ex\left\vert\kern-0.25ex\left\vert #1 
    \right\vert\kern-0.25ex\right\vert\kern-0.25ex\right\vert}}
\newcounter{comentcount}
\newcounter{teocount}
\newtheorem{lem}{Lemma}
\newtheorem{prop}{Proposition}
\newtheorem{theo}[teocount]{Theorem}  
\newtheorem{mydef}{Definition}
\newtheorem{rem}{Remark}
\title[Critical Keller-Segel meets Burgers on $\Ss^1$]{Critical Keller-Segel meets Burgers on $\Ss^1$. Large-time smooth solutions.}
\author[J. Burczak]{Jan Burczak}
\email{jb@impan.pl}
\address{Institute of Mathematics, Polish Academy of Sciences, \'Sniadeckich 8, Warsaw, 00-656, Poland}
\author[R. Granero-Belinch\'{o}n]{Rafael Granero-Belinch\'{o}n}
\email{granero@math.univ-lyon1.fr}
\address{Universit\'e Claude Bernard Lyon 1, CNRS UMR 5208, Institut Camille Jordan, 43 blvd. du 11 novembre 1918, F-69622 Villeurbanne cedex, France}
\begin{document}

\def\Xint#1{\mathchoice
 {\XXint\displaystyle\textstyle{#1}}%
 {\XXint\textstyle\scriptstyle{#1}}%
 {\XXint\scriptstyle\scriptscriptstyle{#1}}%
 {\XXint\scriptscriptstyle\scriptscriptstyle{#1}}%
 \!\int}
 \def\XXint#1#2#3{{\setbox0=\hbox{$#1{#2#3}{\int}$}
 \vcenter{\hbox{$#2#3$}}\kern-.5\wd0}}
 \def\ddashint{\Xint=}
 \def\dashint{\Xint-}
 
 \subjclass[2010]{35B65; 	35R11, 35Q92, 35S11, 92C17}
 \keywords{parabolic-ellipic Keller-Segel, critical fractional diffusion, large-time regularity, asymptotics}

\begin{abstract}
We show that solutions to the parabolic-elliptic Keller-Segel system on $\Ss^1$ with critical fractional diffusion $(-\Delta)^\frac{1}{2}$ remain smooth for any initial data and any positive time. This disproves, at least in the periodic setting, the large-data-blowup conjecture by Bournaveas and Calvez \cite{BouCal}. As a tool, we show smoothness of solutions to a modified critical Burgers equation via a generalization of the ingenious method of moduli of continuity by Kiselev, Nazarov and Shterenberg \cite{KNS} over a setting where the considered equation has no scaling. This auxiliary result may be interesting by itself. Finally, we study the asymptotic behavior of global solutions corresponding to small initial data, improving the existing results.
\end{abstract}

\maketitle 


\section{Introduction}
For unknown functions $u,v$, we study the parabolic-elliptic Keller-Segel problem\footnote{Known also as the Smoluchowski-Poisson equations. We will use both names interchangeably: \emph{the parabolic-elliptic Keller-Segel} and \emph{the Smoluchowski-Poisson} equations/problem.} with the critical fractional diffusion $\Lambda u= (-\Delta)^\frac{1}{2} u$ on the circle group $\Ss^1$. Alternatively, one can think of a $1$d periodic torus  $\T=[-L, L]$. Unknown $u$ follows
\begin{equation}\label{1_ks_u}
\begin{aligned}
\pat u &=-  \Lambda u - {\chi} \pax (u \pax v) \quad \text{ in } \; (0, T) \times \Ss^1, \\
u (0) &= u_0,
\end{aligned}
\end{equation}
where $\chi >0$ is a parameter. 
Let us observe immediately that the formal integration of \eqref{1_ks_u} in space implies 
\begin{equation}\label{eqmass}
\dashint_{\Ss^1} u (x, t) dx = \dashint_{\Ss^1} u_0 (x) dx =: m,
\end{equation} \emph{i.e.} conservation of the \emph{mean mass} $m$. Unknown $v$ is governed by 
\begin{equation}\label{1_ks_v}
0= \pax^2 v  + u - m \quad  \text{ in } \;(0, T) \times  \Ss^1.
\end{equation}
Solutions to  \eqref{1_ks_v} are unique up to an additive constant. We choose it by requiring 
\begin{equation}\label{zeroF}
\dashint_{\Ss^1} v (x,t)dx = 0.
\end{equation}

 In this paper, we deal with the global-in-time existence of classical solutions to  \eqref{1_ks_u} - \eqref{1_ks_v}  and their asymptotic behavior. In particular, we will prove that every $L^2$-initial datum gives rise to a unique solution $(u,v)$ that remains smooth for any $T<\infty$. Moreover, if $\chi m$ is small enough, the solution $(u,v)$ tends to the homogeneous state $(m,0)$.


In order to deal with the introduced Keller-Segel system, we study the following modified fractional Burgers equation
\begin{equation}\label{eq:red}
\begin{aligned}
\pat Z &=- \Lambda Z + f Z  \pax Z    \quad \;\; \text{ in } \; (0, T) \times \Ss^1, \\
Z (0) &= Z_0,
\end{aligned}
\end{equation}
where $ f = f(t)$, $f: \er_+ \to \er$ is a given smooth function. 

In fact, we will use a simple yet powerful observation that solutions to our Keller-Segel system  \eqref{1_ks_u} -  \eqref{1_ks_v}  are given as derivatives of solutions to the modified fractional Burgers equation  \eqref{eq:red}.  An analogous relation has been already utilised in analysis of certain problems, compare Bodnar \& Vel\'azquez \cite{BodVel} or 
Biler, Karch, Lauren\c{c}ot \& Nadzieja  \cite{BKLNplane, BKLNdisc}.

\subsection{Outline of the paper}
Section \ref{sec:mres} gathers our main results. There, Theorem \ref{th} provides smooth and unique large-data solutions to the Keller-Segel equations \eqref{1_ks_u} -  \eqref{1_ks_v}, globally in time. Its analogue for the modified Burgers \eqref{eq:red} is Theorem \ref{th2}. Theorem \ref{asymptotics} provides steady-state asymptotics for \eqref{1_ks_u} -  \eqref{1_ks_v} with initial datum $u_0$ having its mean mass of a \emph{medium size\footnote{By \emph{medium size} we mean that a smallness condition is needed, but, to the best of our knowledge, it is less restrictive than conditions currently available in the literature.}}. Next, Section  \ref{sec:mot} presents motivations to study our problem as well as a review of certain known results related to fractional Keller-Segel, classical Keller-Segel and Burgers equations. There, one of our main goals was justifying that the obtained lack of blowup for \eqref{1_ks_u} -  \eqref{1_ks_v} is a subtle and by no means generic matter. Section \ref{sec:pre} contains a collection of needed definitions, including the notion of the half-laplacian and definitions of a weak solution, as well as some auxiliary results:  short-time solvability and a continuation criterion, with sketches of proofs or relevant references. The following three sections are devoted to proofs of our main theorems. In the concluding Section \ref{sec:conc} we provide comparison of the considered by us periodic case with the real-line case as well as some remarks on relation between our fractional equation and the classical $2$d radially symmetric Smoluchowski-Poisson system. We end mentioning three open questions.

\section{Main results}\label{sec:mres}
The notions used in the following formulations are defined in a standard way, compare Section  \ref{sec:pre}. Our main result that disproves, at least in the periodic setting, the large-data-blowup conjecture by Bournaveas \& Calvez \cite{BouCal} reads
\begin{theo}[Regularity of critical Keller-Segel]\label{th} 
Fix any $T< \infty$ and real $s \ge 0$. For any initial datum $u_0 \in H^s (\So)$ there exists a weak solution $(u,v)$ to  \eqref{1_ks_u} - \eqref{1_ks_v} that satisfies
\[
\begin{aligned}
u &\in C( [0,T]; H^s (\So)) \cap C^\infty ( (0, T) \times \So), \\
v &\in C( [0,T]; H^{s+2} (\So)) \cap C^\infty ( (0, T) \times \So).
\end{aligned}
\]
Furthermore, this solution is unique among weak solutions.
\end{theo}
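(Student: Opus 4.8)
The plan is to reduce the Keller--Segel system \eqref{1_ks_u}--\eqref{1_ks_v} to the modified fractional Burgers equation \eqref{eq:red} and then invoke Theorem \ref{th2}. First I would fix $m=\dashint_{\So}u_0$ and let $Z_0\in H^{s+1}(\So)$ be the unique primitive of $u_0-m$ with zero mean, so that $\pax Z_0=u_0-m$. The claim I would verify is that $(u,v)$ solves \eqref{1_ks_u}--\eqref{1_ks_v} with datum $u_0$ precisely when $u=m+\pax Z$, $\pax v=-Z$, and $Z$ solves $\pat Z=-\Lambda Z+\chi m Z+\chi Z\pax Z$ with $Z(0)=Z_0$. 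Indeed, the elliptic equation \eqref{1_ks_v} gives $\pax^2 v=-\pax Z$; since $\dashint_{\So}\pax v=0$ and $\dashint_{\So}Z=0$ (a mean of the $Z$-equation shows this is propagated in time), one obtains $\pax v=-Z$ exactly. Substituting $u=m+\pax Z$ and $\pax v=-Z$ into \eqref{1_ks_u}, using $\Lambda m=0$, and integrating once in $x$ produces the displayed equation for $Z$, the constant of integration vanishing by the zero-mean property. Finally, the change of unknown $Z=e^{\chi m t}Y$ cancels the linear term and casts the equation into the form \eqref{eq:red} with the smooth coefficient $f(t)=\chi e^{\chi m t}$ and datum $Y(0)=Z_0$.

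With the reduction in hand, I would apply Theorem \ref{th2} with this $f$ and this $Z_0\in H^{s+1}(\So)$ to obtain $Y\in C([0,T];H^{s+1}(\So))\cap C^\infty((0,T)\times\So)$ solving \eqref{eq:red} (note that $\dashint_{\So}Y$ is conserved, so $Y$ stays mean-free). Undoing the exponential scaling keeps the same regularity for $Z$; hence $u=m+\pax Z\in C([0,T];H^{s}(\So))\cap C^\infty((0,T)\times\So)$, while $v$, the zero-mean primitive of $-Z$, gains one further derivative, so $v\in C([0,T];H^{s+2}(\So))\cap C^\infty((0,T)\times\So)$, consistently with elliptic regularity for \eqref{1_ks_v}. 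That the pair $(u,v)$ is a weak solution in the sense of Section \ref{sec:pre} I would check directly, by testing the $Z$-equation against primitives of the admissible test functions. For uniqueness, I would run the correspondence in reverse: any weak solution $(u,v)$ of the system, after choosing for $Z$ the mean-free primitive of $u-m$ at each time, yields a weak solution $Y=e^{-\chi m t}Z$ of \eqref{eq:red} with datum $Z_0$, which by the uniqueness part of Theorem \ref{th2} must coincide with the one just constructed; this forces $u=m+\pax Z$ and $v$ to coincide with ours as well.

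I expect the genuine difficulty --- apart from Theorem \ref{th2} itself, whose proof, a scaling-free adaptation of the moduli-of-continuity method, is the technical core of the paper --- to be twofold: matching the two notions of weak solution all the way down to $t=0$, and the low-regularity endpoint $s=0$ of the uniqueness step, where the quadratic nonlinearity $\pax(u\pax v)$ is only borderline controlled in $L^2$-based spaces. For $t>0$ all solutions are smooth, so uniqueness on $(0,T)\times\So$ is classical; propagating it to the initial time should follow from the continuity statements together with the short-time well-posedness and continuation criterion recorded in Section \ref{sec:pre} and a weak--strong type energy estimate, exploiting the instantaneous smoothing furnished by Theorem \ref{th2}. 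The rest is bookkeeping of the reduction, chiefly the vanishing of the constant of integration and the in-time propagation of $\dashint_{\So}Z=0$.
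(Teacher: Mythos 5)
Your proposal is correct and follows essentially the same route as the paper: the zero-mean primitive $Z$ of $u-m$ with $\pax v=-Z$, the exponential change of unknown producing \eqref{eq:red} with $f(t)=\chi e^{\chi m t}$, Theorem \ref{th2} for existence and smoothness, and the reverse correspondence plus uniqueness for \eqref{eq:red} to get uniqueness of $(u,v)$. The only (cosmetic) difference is notational --- your $Z$ and $Y$ are the paper's $W$ and $Z$ --- and you flag the weak-solution bookkeeping at $s=0$ slightly more explicitly than the paper does.
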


As already mentioned, the following result is needed as a tool to prove Theorem \ref{th}.  Since it provides details of a generalization of the method of moduli of continuity by Kiselev, Nazarov and Shterenberg \cite{KNS} over equations with no scaling, it may be interesting by itself.
\begin{theo}[Regularity of critical modified Burgers]\label{th2}
Fix any $T < \infty$,  a real $s \ge 1$ and a smooth $f=f(t)$. For any initial datum $Z_0 \in H^{s} (\So)$, problem \eqref{eq:red} admits a smooth solution
\[
Z \in C ([0, T]; H^{s} (\So)) \cap C^\infty ( (0, T) \times \So).
\]
It is unique among weak solutions.
\end{theo}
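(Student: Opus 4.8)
\textbf{Proof proposal for Theorem \ref{th2}.}

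The plan is to follow the strategy of Kiselev--Nazarov--Shterenberg \cite{KNS}, but to adapt it to an equation without scaling invariance, since the lower-order term $fZ\pax Z$ breaks the dilation symmetry that \cite{KNS} exploits. The key conceptual shift is that, instead of a single self-similar modulus of continuity, I would work with a one-parameter family of moduli $\omega_\delta$ (or simply track the dependence of the constants on the length scale explicitly), and show that for a sufficiently strong modulus $\omega$ the solution $Z(\cdot,t)$ obeys $\omega$ for all $t\in[0,T]$ provided $Z_0$ does. First I would invoke the short-time existence and the continuation criterion from Section \ref{sec:pre}: a local-in-time solution $Z\in C([0,T^*);H^s)$ exists, and it extends as long as, say, $\|\pax Z\|_{L^\infty}$ stays bounded; hence it suffices to produce an a priori bound on a suitable modulus of continuity of $Z$, from which bounds on $\|\pax Z\|_{L^\infty}$ (and then, by bootstrapping with the smoothing of $\Lambda$, on all higher norms, yielding $C^\infty$ in the interior) follow.

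The core step is the modulus-of-continuity argument. I would fix a modulus $\omega:[0,\infty)\to[0,\infty)$, concave, increasing, with $\omega(0)=0$, $\omega'(0)<\infty$ and $\omega''(0^+)=-\infty$ (the \cite{KNS} choice, built so that $\omega(\xi)=\xi\cdot(\text{something})$ near $0$ and grows slowly at $\infty$), and argue by contradiction: if $Z_0$ strictly obeys $\omega$ but $Z(\cdot,t)$ does not for some $t\le T$, then at the first breakdown time $t_0$ there exist $x\ne y$ with $Z(x,t_0)-Z(y,t_0)=\omega(|x-y|)$ and this is a ``touching'' configuration. At such a point one estimates $\pat\big(Z(x,t_0)-Z(y,t_0)\big)$ and wants to show it is strictly negative, contradicting breakdown. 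The dissipative term $-\Lambda Z(x)+\Lambda Z(y)$ contributes the same good negative quantity $D(\xi)$ as in \cite{KNS} (this computation is purely about $\Lambda$ and the touching geometry, and is insensitive to scaling once $\xi=|x-y|$ is fixed). The new term is $f(t_0)\big(Z\pax Z(x)-Z\pax Z(y)\big)=\tfrac{f(t_0)}{2}\pax(Z^2)(x)-\tfrac{f(t_0)}{2}\pax(Z^2)(y)$; using that at a touching point $\pax Z(x)=\pax Z(y)=\omega'(\xi)$ and that $|Z|$ is controlled by $|Z(0)|+\tfrac12\omega(\text{diam})=:M$ (here I use periodicity to bound the oscillation of $Z$ by $\omega$ evaluated at the diameter of $\So$, plus conservation of the mean, which controls $|Z(0,t)|$), this term is bounded by $C\,|f(t_0)|\,M\,\omega'(\xi)$. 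So the breakdown inequality reduces to requiring
\[
C\,\sup_{[0,T]}|f|\;\big(|Z(0,0)|+\omega(\pi)\big)\;\omega'(\xi) \;<\; -D(\xi) \qquad \text{for all } \xi\in(0,2\pi].
\]
Because $-D(\xi)\gtrsim \omega'(\xi)\,\xi^{-1}$ for small $\xi$ and $-D(\xi)\gtrsim \xi^{-1}$ (growing dissipation relative to the bounded $\omega'$) for $\xi$ bounded below, one can, exactly as in \cite{KNS} but now choosing the free constants in $\omega$ depending on $T$, $\sup_{[0,T]}|f|$ and $\|Z_0\|$, make the right side dominate; the absence of scaling just means $\omega$ is no longer universal but is allowed to depend on these data, which is harmless for a fixed finite $T$.

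The main obstacle I anticipate is precisely the interplay between the non-scaling term and the choice of modulus: in \cite{KNS} one can rescale so that ``$Z_0$ obeys $\omega$'' is automatic for the universal $\omega$, whereas here I must first fix $\omega$ so strong that $Z_0\in H^s$, $s\ge1$, obeys it (possible since $H^s\hookrightarrow C^{0,\alpha}$ gives an initial modulus, and any concave modulus with $\omega'(0)=+\infty$... — rather, with $\omega$ chosen with large enough slope at $0$ — dominates it), and simultaneously so that the inequality above holds for all $t\le T$ with the a priori bound $M$ that itself depends on $\omega(\pi)$. This is a fixed-point-flavored consistency condition: enlarging $\omega$ to dominate $Z_0$ also enlarges $M$, hence the ``enemy'' term. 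I expect this to be resolvable because the enemy term grows only like $\omega(\pi)\cdot\omega'(\xi)$ while the dissipation $-D(\xi)$ can be made to grow like $B\,\omega'(\xi)$ with $B$ an independent large parameter in the construction of $\omega$ (one inserts a large multiplicative constant in the ``outer'' part of $\omega$), so choosing $B\gg \sup|f|\cdot\omega(\pi)$ closes the argument. Once the modulus bound holds on $[0,T]$, it gives a uniform Lipschitz bound $\|\pax Z\|_{L^\infty}\le\omega'(0)$, the continuation criterion upgrades the local solution to a global one on $[0,T]$ in $C([0,T];H^s)$, parabolic smoothing of $\Lambda$ (with the now-bounded transport coefficient $fZ$) gives $C^\infty((0,T)\times\So)$, and uniqueness among weak solutions follows from a standard energy/$L^2$-contraction estimate using the Lipschitz bound on $Z$.
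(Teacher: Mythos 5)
Your overall strategy is the same as the paper's: a KNS-style modulus-of-continuity argument run with a data-dependent family of moduli (rather than a single scaling-invariant one), combined with the local existence and continuation results of Section \ref{sec:pre}, and that architecture is sound. The gap is in your estimate of the nonlinear term at the touching point. You bound $f\bigl(Z\pax Z(x)-Z\pax Z(y)\bigr)$ by $C\,|f|\,M\,\omega'(\xi)$ with $M=\sup|Z|$. This throws away the essential cancellation: using your own observation that $\pax Z(x)=\pax Z(y)=\omega'(\xi)$ at the touching configuration together with $Z(x)-Z(y)=\omega(\xi)$, the term equals $f\,\omega'(\xi)\,\omega(\xi)$, which vanishes as $\xi\to 0$; your bound does not. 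This matters because the dissipation available from $\Lambda$ for \emph{any} admissible modulus necessarily degenerates as $\xi\to0$: the ``local'' part is controlled by $\tfrac1\pi\xi\,\omega''(\xi)$, and since $\omega'(0)<\infty$ forces $\int_0^1|\omega''|<\infty$, the quantity $\xi|\omega''(\xi)|$ cannot stay bounded away from zero; likewise the tail integral gives $-c\,\omega(\xi)/\xi$ only under a doubling condition $\omega(2\xi)\le(1+\gamma)\omega(\xi)$, $\gamma<1$, which fails near the origin for any modulus that is asymptotically linear there (and every $\omega$ with finite, nonzero $\omega'(0)$ is). Consequently your claimed inequality $C\sup|f|\,M\,\omega'(\xi)<-D(\xi)$ has a left side tending to the positive constant $C\sup|f|\,M\,\omega'(0)$ and a right side tending to $0$ as $\xi\to0$, so it cannot hold for small $\xi$; in particular your asserted lower bound $-D(\xi)\gtrsim\omega'(\xi)\,\xi^{-1}$ for small $\xi$ is not available.

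The fix is exactly the paper's computation: with the sharp bound the enemy term is $\Gamma\,\omega(\xi)\,\omega'(\xi)\sim\Gamma B^2\xi$ near the origin (where $B=\omega'(0)$), and for the modulus $\omega(\xi)=B\xi/(1+K\sqrt{B\xi})$ the local dissipation behaves like $-cKB^{3/2}\sqrt{\xi}$, which dominates once $K\gtrsim\Gamma\sqrt{B\xi_0}$; the tail dissipation $-c\,\omega(\xi)/\xi$ is then only needed in the outer (logarithmic) regime where the doubling condition does hold. With that correction, the rest of your scheme (the consistency between ``$Z_0$ obeys $\omega$'' and the size of the coefficients, resolved by inserting an extra free parameter in the outer part of $\omega$ --- the paper's $N$; the continuation via $|\pax Z|_\infty\le\omega'(0)$; smoothing; and $L^2$-uniqueness) matches the paper's proof.
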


Finally, we study the asymptotic behavior of solutions to \eqref{1_ks_u} - \eqref{1_ks_v}. Recall that $m$ denotes the initial mean mass, see  \eqref{eqmass}.
\begin{theo}[Steady state asymptotics of critical Keller-Segel]\label{asymptotics}
Let $u_0\in L^2 (\Ss^1)$ be a $2\pi-$periodic initial datum for the system \eqref{1_ks_u} -  \eqref{1_ks_v}. 
Assume further that \[\chi m<1.\] Then, there exist $ \Sigma>0$ such that the solution couple $(u,v)$ to \eqref{1_ks_u} -  \eqref{1_ks_v} verifies for $t>0$
$$
|(u-m)(t)|_{\alpha}+|v(t)|_{2+ \alpha} \leq \Sigma e^{ (1-\alpha) \left(-1+\chi m\right) t}
$$
with any $\alpha \in [0,1)$.
\end{theo}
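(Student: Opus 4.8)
The plan is to work with the primitive $v$ rather than $u$ directly, exploiting the relation $u-m = \pax^2 v$ coming from \eqref{1_ks_v}, or equivalently $v = -\Lambda^{-2}(u-m)$ on $\So$ with zero mean. Since Theorem \ref{th} already guarantees a unique global smooth solution for $t>0$, the task is purely a decay estimate. First I would establish an $L^2$ differential inequality: multiply \eqref{1_ks_u} by $u-m$ and integrate over $\So$. The diffusion term gives $-\|\Lambda^{1/2}(u-m)\|_{L^2}^2$, and the nonlinear term $-\chi\int (u-m)\pax(u\pax v)$ can be integrated by parts and rewritten using $\pax^2 v = u-m$; one obtains a cubic term that, crucially, is controlled by $\chi m$ times a quadratic quantity plus genuinely cubic pieces that one hopes are sign-definite or absorbable. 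The condition $\chi m<1$ is exactly what makes the linearized decay rate $-1+\chi m$ negative, since on $\So$ with zero mean the Poincaré-type inequality $\|\Lambda^{1/2}w\|_{L^2}^2 \ge \|w\|_{L^2}^2$ holds. This yields exponential decay of $\|u-m\|_{L^2}$ at rate $-1+\chi m$, and then $\|v\|_{H^2}$ decays at the same rate by the elliptic relation.

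Next, to reach the Hölder norms $|u-m|_\alpha$ and $|v|_{2+\alpha}$ with the stated rate $(1-\alpha)(-1+\chi m)$, I would interpolate. The factor $1-\alpha$ strongly suggests an interpolation between an $L^2$-type norm (decaying at the full rate $-1+\chi m$) and a high-order norm that is merely bounded — or decays at a worse, possibly zero, rate — on $t\ge\delta>0$. Concretely, one uses that for $t\ge 1$ (say), the smoothing from Theorem \ref{th} provides uniform-in-time bounds on $\|u(t)\|_{H^k}$ for every $k$ once $\chi m<1$ keeps the solution from growing; combined with the exponential $L^2$ decay, an inequality of the form $|w|_{C^\alpha}\lesssim \|w\|_{L^2}^{1-\theta}\|w\|_{H^k}^{\theta}$ with $\theta$ chosen so that $C^\alpha$ sits at the right interpolation point gives decay at rate $(1-\theta)(-1+\chi m)$, and matching exponents forces $1-\theta = 1-\alpha$. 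The analogous statement for $v$ follows since $v\in H^{s+2}$ whenever $u\in H^s$, shifting regularity by two derivatives, so $|v|_{2+\alpha}$ is controlled by $|u-m|_\alpha$ up to constants. For small $t\in(0,1]$ one absorbs everything into the constant $\Sigma$ using continuity of $t\mapsto\|u(t)-m\|_{H^s}$ up to $t=0$.

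The main obstacle I anticipate is the nonlinear term in the $L^2$ energy estimate: after integration by parts one must show that the cubic contribution does not destroy the $-1+\chi m$ rate. Writing $u\pax v = (u-m)\pax v + m\pax v$, the piece with the factor $m$ produces, via $\pax^2 v = u-m$, exactly a $-\chi m\int(\pax v)\pax(u-m)$-type term whose worst part is $+\chi m\|u-m\|_{L^2}^2$ after another integration by parts using $\pax^3 v = \pax(u-m)$ — this is the source of the $\chi m$ in the rate and must be tracked with the correct sign. The genuinely cubic remainder $-\chi\int(u-m)\pax((u-m)\pax v)$ must then be shown to be either nonpositive or dominated; integrating by parts turns it into $\chi\int \pax(u-m)\,(u-m)\,\pax v = \tfrac{\chi}{2}\int \pax((u-m)^2)\pax v = -\tfrac{\chi}{2}\int (u-m)^2 \pax^2 v = -\tfrac{\chi}{2}\int (u-m)^3$, a cubic term which, while not sign-definite, can be controlled by $\|u-m\|_{L^\infty}\|u-m\|_{L^2}^2$, and then one closes the argument by first proving (via Theorem \ref{th2} applied to $Z$ with $\pax Z = u-m$) that $\|u-m\|_{L^\infty}\to 0$, so this term is a lower-order perturbation for large time and the linearized rate prevails; for small time it is again swept into $\Sigma$. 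Assembling the $L^2$ decay, the $L^\infty$ smallness, the uniform high-order bounds, and the interpolation then gives the claimed estimate for all $t>0$.
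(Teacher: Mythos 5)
Your overall architecture (an exponential decay at the linear rate $-1+\chi m$ for a low-order norm, uniform-in-time high-order bounds for large times, an interpolation producing the factor $1-\alpha$, and absorption of small times into $\Sigma$) is the same as the paper's. However, both inputs to the interpolation are left unsupported, and the first gap is fatal as stated.

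The $L^2$ energy estimate for $u-m$ does not close. As you compute, after integrating by parts the genuinely cubic remainder is $\pm\frac{\chi}{2}\int_{\So}(u-m)^3$, and without further information the best available bound is $\frac{\chi}{2}|u-m|_{0}|u-m|_{L^4}^2\lesssim \chi|u-m|_{0}^2|\Lambda^{\frac{1}{2}}(u-m)|_{0}$, which after Young's inequality leaves a quartic term $C\chi^2|u-m|_{0}^4$ that destroys the linear rate unless $|u-m|_{0}$ (or $|u-m|_{\infty}$) is already known to be small --- which is exactly what you are trying to prove; the hypothesis $\chi m<1$ imposes no smallness on $u_0-m$. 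You propose to supply the missing smallness of $|u-m|_{\infty}$ from Theorem \ref{th2} applied to the primitive $Z$, but Theorem \ref{th2} is a pure regularity statement and yields no decay: the paper explicitly warns that the moduli-of-continuity construction only controls $|\pax Z|_{\infty}$ by a quantity that may a priori grow double-exponentially in time. The paper's way out is to run the $L^2$ and $L^\infty$ estimates on the primitive $W$ solving \eqref{1_ks'} rather than on $u-m$: there the nonlinearity $\chi W\pax W=\frac{\chi}{2}\pax(W^2)$ integrates to zero against $W$ and vanishes at spatial extrema, so $|W(t)|_{0}$ and $|W(t)|_{\infty}$ decay like $e^{(-1+\chi m)t}$ \emph{unconditionally}, with no cubic obstruction. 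Since $\pax v=-W$, this decay is precisely what makes $\chi|\pax v|_{\infty}$ small after some finite $T^*$.

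The second gap is the claim that Theorem \ref{th} provides uniform-in-time bounds on $|u(t)|_{H^k}$: that theorem gives smoothness on each finite $[0,T]$ with constants that may depend on $T$. The uniform bound $\sup_{t}|W(t)|_{2}\le C$ must be proved, and in the paper it is extracted from a chain of energy estimates (testing with $\Lambda W$, then $\Lambda u$, then $\Lambda^2 u$) whose nonlinear terms are absorbed into the dissipation precisely thanks to the smallness of $\chi|W|_{\infty}$ for $t\ge T^*$ obtained in the first step. So the correct logical order is: unconditional decay of $W$ in $L^2$ and $L^\infty$ first, then uniform high-order bounds, then the interpolation $|(u-m)(t)|_{\alpha}=|W(t)|_{1+\alpha}\le C|W(t)|_{0}^{\frac{1-\alpha}{2}}|W(t)|_{2}^{\frac{1+\alpha}{2}}$ (note also that $|\cdot|_{\alpha}$ in the statement is the Sobolev norm $H^\alpha$, not the H\"older seminorm). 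Your proposal inverts this order, and each of its two pillars is made to rest on the other.
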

\begin{rem}
Theorem \ref{asymptotics} can also be proved for a general period $2L$. Then, the smallness condition takes the form  $\chi m<C(L)$. 

In fact, one can obtain a counterpart of Theorem \ref{asymptotics} for any $\alpha \in \er_+$, iterating the procedure presented in its proof further on. We were interested in obtaining decays that imply decay of $|(u-m)(t)|_{\infty}$, since the control of the supremum norm is the focal point in Keller-Segel-type equations.
\end{rem}
Let us recall that  for the case $\chi=1$ a similar condition, namely 
$m< (2\pi)^{-2}$, 
was obtained in  Ascasibar, Granero-Belinch\'{o}n \& Moreno  \cite{AGM} with a different method.  Theorem \ref{asymptotics} relaxes that condition.


\section{Motivation}\label{sec:mot}
First, we justify studies of the fractional Keller-Segel equations from the perspective of applications. Next, we indicate why the considered problem is challenging and rather subtle analytically.
\subsection{Applicational interest}
From the perspective of mathematical biology, the equations  \eqref{1_ks_u} - \eqref{1_ks_v} are a $1$d model of behavior of microorganisms (with density $u$) attracted by a chemical substance (with normalized density $v$). The parameter $\chi>0$ quantifies the sensitivity of organisms to the chemical signal. In the original system by Keller \& Segel \cite{KelSeg70} and its classical variations (compare for instance the survey \cite{HilPai_hbk} by Hillen \& Painter) the natural motility of microorganisms is modeled by $- \Delta u$, instead of our $(-\Delta)^\frac{1}{2} u$.  However, skippy movements of shrimps provide a good heuristic counterexample to the choice of laplacian. Actually, there is a strong evidence, both theoretical and empirical, that feeding strategies based on a L\'evy process (generated in its simplest isotropic-$\alpha$-stable version  by $(-\Delta)^\frac{\alpha}{2} u$) are both closer to optimal ones and indeed used by certain organisms, especially in low-prey-density conditions. The interested reader can consult Lewandowsky, White \& Schuster \cite{Lew_nencki} for amoebas, Klafter,  Lewandowsky \& White \cite{Klaf90} as well as Bartumeus et al. \cite{Bart03} for microzooplancton, Shlesinger \& Klafter \cite{Shl86} for flying ants and Cole \cite{Cole} in the context of fruit flies. Surprisingly, even for groups of large vertebrates, their feeding behavior is argued to follow L\'evy motions, the fact referred sometimes as to the \emph{L\'evy flight foraging hypothesis}. See  Atkinson,  Rhodes, MacDonald \& Anderson \cite{Atk} for jackals, Viswanathan et al. \cite{Vnature} for albatrosses, Focardi, Marcellini \& Montanaro \cite{deers} for deers and Pontzer et al. \cite{hadza} for the Hadza tribe. 

Thus, using  $(-\Delta)^\frac{\alpha}{2}$ in the equation for the density $u$ is fully justified for modeling certain biological phenomena. Our focus on  $(-\Delta)^\frac{1}{2}$ and on the $1$d case will become clear in the next subsection. 


Let us finally remark that writing $v := - \phi$ in  \eqref{1_ks_u} -  \eqref{1_ks_v}, we obtain a system for $(u, \phi)$ that is important in mathematical chemistry, cosmology and gravitation theory. It is very similar in spirit to the Zeldovich approximation used in cosmology to study the formation of large-scale structure in the primordial universe, see the works by Ascasibar, Granero-Belinch\'on \& Moreno \cite{AGM} and Biler \cite{Bi1}. It is also connected with the Chandrasekhar equation for the gravitational equilibrium of polytropic stars, statistical mechanics and the Debye system for electrolytes, compare for example Biler \& Nadzieja \cite{BilNad94}.

\subsection{Mathematical interest}
\subsubsection{Results on the $1$d Smoluchowski-Poisson equation}
We begin with explaining our focus on the half-laplacian in the equation for $u$. On $\er$, the problem
\[
\begin{aligned}
\pat u &=-  \Lambda^\alpha u - {\chi} \pax (u \pax v), \\
0 &= \pax^2 v + u,
\end{aligned}
\]
where $\Lambda^\alpha = (-\Delta)^\frac{\alpha}{2}$, 
enjoys global-in-time, regular solutions for $\alpha > 1$ and blowups for  $\alpha < 1$.  More precisely, Escudero in \cite{Escudero} shows global regularity in the case $\alpha > 1$ for any initial datum $u_0 \in H^1$. Bournaveas \& Calvez in \cite{BouCal} generalize this result by allowing any $u_0$ from  $L^{1+\delta}, \delta>0$. More importantly, they provide a class of initial data that gives rise to the finite-time blowup for $\alpha <1$ as well as a smallness condition $| u_0 |_{L^\frac{1}{\alpha}} \le K (\alpha)$ implying the global existence for $\alpha \le 1$. Consequently, the case $\alpha=1$ seems critical. To the best of our knowledge, presently the sharpest result is contained in Ascasibar, Granero-Belinch\'{o}n \& Moreno \cite{AGM}. It says, for the periodic torus $[-\pi, \pi]$, that the condition $m \leq (2\pi)^{-2}$ implies global existence and convergence towards the homogeneous steady state.
This bound on the initial data for the case \eqref{1_ks_u} -  \eqref{1_ks_v} can be recovered from both  \cite{BG} by the authors and \cite{GO} by Granero-Belinch\'{o}n \& Orive-Illera, where  more general systems were considered.

Hence, the remaining unresolved case is $\alpha =1$ for initial data with large masses. For this case, Bournaveas \& Calvez \cite{BouCal} conjecture a blowup of solutions, for certain large data, upon a numerical evidence. In this context Theorem \ref{th} seems especially interesting, since it provides an analytical proof for the opposite. 

Let us finally remark that in \cite{BG2} we have obtained a series of regularity results for the doubly parabolic generalization of  \eqref{1_ks_u} -  \eqref{1_ks_v}. Moreover, in \cite{BG3A, BG4} we proved that in the case of a logistically-damped Keller-Segel system, even certain fractional diffusions below the `critical' one ($\alpha=d$, $d=1,2$)  yield global-in-time smooth solutions.

\subsubsection{Results on the fractional Smoluchowski-Poisson-type equations in higher dimensions}

Let us recall some results for systems of type  \eqref{1_ks_u} -  \eqref{1_ks_v} in higher dimensions. They generally concern a system
\begin{equation}\label{eq:KD}
\begin{aligned}
\pat u &=- \mu \Lambda^\alpha u -  \nabla \cdot (u \nabla v), \\
v &= K * u
\end{aligned}
\end{equation}
in $\er^d$, $d \ge 2$, where $K$ stands for a nonincreasing (\emph{i.e.} attractive) interaction kernel. In \cite{BilKarLau09} Biler, Karch \& Lauren\c{c}ot, under rather general assumptions on $K$ (that allow for the Newtonian potential in particular), show blowup of solutions in the inviscid ($\mu = 0$) or $\alpha \in (0, 1)$ cases. For $d=2$ this result was generalized over $\alpha \in (0, 2)$  in  \cite{LiRodZha10} by Li, Rodrigo \& Zhang\footnote{Hence, within the generators of isotropic-$\alpha$-stable L\'evy processes, \emph{i.e.}  $\Lambda^\alpha u$ with $\alpha \in (0,2)$, one can have $\alpha_0$ that divides between regimes of large data global-in-time regularity  and blowups only for $d<2$. This is another strong justification for our interest in the one-dimensional case.}. A study for $K = e^{-|x|}$ was performed by Li \& Rodrigo in \cite{LiRod09},  \cite{LiRod09a}, \cite{LiRod10}.  In particular,  in \cite{LiRod10} they show regularity for  $\alpha > 1$ and  for $\alpha =1$ with small data. In \cite{LiRod09} blowups for $\alpha \in (0,1)$ are given. 
Notice that this choice of $K$ implies that $v$ solves
\[
c_d(1-\Delta)^{\frac{d+1}{2}} v = u,
\]
where $c_d(1-\Delta)^{\frac{d+1}{2}}$ is the pseudo-differential operator given on the Fourier side by
\[
\widehat{c_d(1-\Delta)^{\frac{d+1}{2}} v} (\xi) = c_d(1 + |\xi|^2)^\frac{d+1}{2} \hat v (\xi).
\]
Let us mention here also Biler \& Wu \cite{BilWu09}, that provides for $d=2$ and $K =- \frac{1}{2 \pi} \ln |x|$ a local-in-time well-posedness result in critical Besov spaces for $\alpha \in (1, 2)$.  

\subsubsection{Results on the classical Smoluchowski-Poisson equation}
The most classical case $d=2$, $\alpha =2$ and $K = -\frac{1}{2 \pi} \ln |x|$ of \eqref{eq:KD} exhibits the smoothness/blowup dichotomy, depending on the size of the initial mass, with the threshold mass $\frac{8 \pi}{\chi}$. The literature here is abundant, so let us only mention the seminal results by J\"ager \& Luckhaus \cite{JagLuc92} and  Nagai \cite{Nagai95} as well as the concise, more recent note by Dolbeault \& Perthame \cite{DolPer06}, where the threshold mass $\frac{8 \pi}{\chi}$ is easy traceable. Interestingly, even in this most classical case a \emph{single} quantity that both gives local existence and blowup criterion is still not fully agreed upon, since for the local-in-time existence one needs to assume not only the finiteness of the initial mass. Currently, the best candidate seems to be the scaling-invariant Morrey norm, compare Biler, Cie\'slak, Karch \& Zienkiewicz \cite{BCKZ} and its references. 

We refer to Section \ref{sec:conc} for some observations on comparing the radially symmetric $2$d classical parabolic-elliptic Keller-Segel system with our problem.

\subsection{Results on fractional Burgers and related equations in $1$d}
 Kiselev, Nazarov \& Sheterenberg in \cite{KNS} develop a method of moduli of continuity for proving the regularity of the critical case \eqref{eq:red} with $f \equiv 1$ and provide a rather complete picture: one has global, regular solutions for $\alpha \ge 1$ and blowups for some initial data for  $\alpha < 1$. Let us recall here also the earlier work \cite{BilFunWoy98} by Biler, Funaki \& Woyczy\'nski. The method of moduli of continuity was used to solve the major regularity problem of the $2$d critical surface quasi-geostrophic equation, see the celebrated paper by Kiselev, Nazarov \& Volberg \cite{KNV}. 
 
Interestingly, certain equations that resemble \eqref{eq:red}  at the first glance behave totally different. For instance, 
Chae, C\'ordoba, C\'ordoba \& Fontelos \cite{CCCF} (see also Castro \& C{\'o}rdoba \cite{CC} and C\'ordoba, C\'ordoba \& Fontelos \cite{CCF}) considered
\begin{equation}\label{eqcccf}
\pat Z =-  \mu \Lambda^\alpha Z - \delta\pax (Z  H( Z )) - (1-\delta)\pax Z  H( Z ),
\end{equation}
where $H$ stands for the Hilbert transform and $\delta \in [0,1]$. For the inviscid case $\mu=0$, they show that \eqref{eqcccf} develops finite-time singularities in the whole range $0\leq \delta\leq 1$. At the same time, \emph{weak} solutions exist globally-in-time for $\delta\geq \frac{1}{2}$, compare Bae \& Granero-Belinch\'on \cite{BG3}. What's more interesting for our considerations, in the case $\mu>0$ and $\alpha=\delta=1$ equation \eqref{eqcccf} develops singularities for large data and remains regular for small data. Li and Rodrigo in \cite{LiRod11} considered \eqref{eqcccf} with $\delta =1$ and the opposite sign of nonlinearity, \emph{i.e.}
\begin{equation}\label{eqLR}
\pat Z =-  \mu\Lambda^\alpha Z + \pax (Z  H( Z ))
\end{equation}
and obtained blow up in the whole range of $0\leq \alpha\leq 2$. 

Hence, comparing the regularity result of Kiselev, Nazarov \& Shterenberg \cite{KNS} (and our Theorem \ref{th2}) for \eqref{eq:red} with the just-mentioned large-data blowup results for equations  \eqref{eqcccf} and \eqref{eqLR}, one realizes how decisive is the exact form of the nonlinearity for the regularity studies. More precisely, $\pm  \pax (Z  H( Z ))$ at  \eqref{eqcccf} with $\delta=1$ and at \eqref{eqLR} acts in fact as \emph{both} a semilinear diffusion $\pm  Z  \Lambda Z$  \emph{and} a nonlinear transport term $\pm   H( Z )  \pax Z$. Consequently, the sign of this nonlinearity turns out to be decisive, because it may produce either a forward or a backward-type nonlinear diffusion equation. On the other hand, the nonlinearity $\pm  Z  \pax Z$ of \cite{KNS} or of our \eqref{eq:red} is merely a nonlinear transport term. Hence it does not spoil the  critical-case global regularity for any sign $\pm$ (compare Theorem \ref{th2} and observe that $f$ at \eqref{eq:red} may be negative). Actually, the negative sign has a regularizing effect. In this context, let us recall that Granero-Belinch\'on \& Hunter \cite{GH} show for 
\begin{equation}\label{NLKS}
\partial_t Z=(\Lambda^{\gamma}-\epsilon\Lambda^{1+\delta})Z -Z\partial_x Z
\end{equation}
with $1>\delta>0$, $\gamma \in [0, 1+\delta)$, $0<\epsilon<1$, the global existence of solutions. Furthermore, these solutions develop spatio-temporal chaos and remain close to the bounded attractor. Notice that the linearized version of \eqref{NLKS} in Fourier space is
$$
\frac{d}{dt}\hat{Z}=(|\xi|^{\gamma}-\epsilon|\xi|^{1+\delta})\hat{Z}.
$$
Since the linear part of \eqref{NLKS} contains the backward diffusion $\Lambda^\gamma$, the linear term pumps energy into the lower Fourier modes. Consequently, the boundedness of the solutions to \eqref{NLKS} is due only to the nonlinear term $-Z\partial_x Z$. 

\section{Preliminaries}\label{sec:pre}
We use standard definitions for Lebesgue and Hilbert spaces,  writing for brevity $| \cdot |_{H^k} = | \cdot |_k$, where $k < \infty$; $k=0$ stands for the $L^2$ norm. The supremum norm is denoted with $| \cdot |_\infty$. It is important not to mistake $ | \cdot |_m$ for a finite $m$ (a Hilbert norm) and for $m=\infty$ (the supremum norm). We will sometimes suppress indication of the domain involved, when there is no danger of confusion.
\subsection{Half-laplacian}\label{ssec:lambda}
Take $\alpha \in (0, 2)$. For an $f\!:  \Omega \to \er$ we have by definition
$$
\widehat{ \Lambda^\alpha f}(\xi) =  \, |\xi|^\alpha \hat{f}(\xi)
$$
in Fourier variables, $\Omega$ being either $\So$ or $\er$. This operator is the infinitesimal generator of the isotropic-$\alpha$-stable L\'evy process. Let us focus on the half-laplacian case $\alpha = 1$. It admits the following equivalent kernel representations for $\Omega = \er$
\begin{equation}\label{1}
\begin{aligned}
\Lambda f(x)&= \frac{1}{\pi} \text{P.V.}\int_{\er}\frac{f(x)-f(y)}{|x-y|^{2}}dy,\\
\Lambda f(x)&= \left[\frac{d}{dh} (P_h * f) \right]_{|  h = 0} \; ,
\end{aligned}
\end{equation}
where $P_h = \frac{1}{\pi} \frac{h}{h^2 + y^2}$ is the Poisson kernel. The latter representation follows from the harmonic extension theorem, see the celebrated \cite{CafSil}  by Caffarelli \& Silvestre. 

It is common to use on $\So$ the kernel representation
\begin{equation}\label{2}
\Lambda f(x) = \frac{1}{\pi} \sum_{\gamma \in \ZZ} \text{P.V.}\int_{\So}\frac{f(x)-f(y)}{|x-y - 2L \gamma|^{2}}dy,
\end{equation}
that explicitly involves a chosen period $2L$, whereas the expressions \eqref{1} are standard for the real line case. In the particular case $L=\pi$, using complex analysis tools to add the previous series, we can write
\begin{equation}\label{1sin}
\Lambda f (x) =\frac{1}{2\pi}\text{P.V.}\int_{-\pi}^\pi \frac{f(x)-f(x-y)}{\sin^2\left(y/2\right)}dy.
\end{equation}

Nevertheless, we derive our main results using real-line expressions \eqref{1} for periodic function. This is admissible, since a $2L$-periodic, sufficiently smooth $u$ produces in \eqref{1} a $2L$-periodic $\Lambda f$ and gives there integrability at infinity. Consequently, both principal values at \eqref{1} and at \eqref{2} concern in fact only the behavior around the origin. Moreover, the regularity of the involved functions will pose no problem, since a local-in-time smoothness is used in the proofs of our main results. For functions with little regularity, for instance in proofs of local-in-time existence, one can rely on the Fourier-side definition. The advantage of using \eqref{1} becomes clear in the proofs. Besides, it allows to justify our reasoning for an arbitrary period $2L$ at once. 

\subsection{Weak solutions}\label{ssec:weak}

\begin{mydef}[Weak solutions to Keller-Segel system]
Choose $u_0 \in L^2(\So)$. Fix an arbitrary $T \in (0, \infty)$. The couple \[u\in C ([0,T), L^2(\So)),\qquad v \in C ([0,T), H^{2}(\So))\] is a \emph{weak solution} of   \eqref{1_ks_u} - \eqref{1_ks_v}  if and only if
\[
\begin{aligned}
\int_0^T\int_\So - u \, \pat\vhi + u \,  \Lambda \vhi  - ( \chi u\pax v) \,  \pax \vhi  &= \int_\So u_0 \, \vhi(x,0)  dx, \\
\pax^2 v (x,t) &= u(x,t)  - m \quad \text{ a.e. in } [0, T) \times \So, \\
\end{aligned}
\]
where $\vhi$ is an arbitrary, compactly supported\footnote{This is related only to lack of space integrals at $T$.} $C^\infty((-1,T)\times \So)$ function. 
\end{mydef}
Similarly, let us introduce
\begin{mydef}[Weak solutions to modified Burgers equation]
Choose $u_0 \in L^2(\So)$. Fix an arbitrary $T \in (0, \infty)$.  \[Z\in C ([0,T), L^2(\So))\] is a \emph{weak solution} of   \eqref{eq:red}  if and only if
\[
\int_0^T\int_\So - Z \, \pat\vhi  + Z \Lambda \vhi  + \frac{f}{2} Z^2 \, \pax \vhi = \int_\So Z_0 \vhi(x,0)dx,
\]
where $\vhi$ is an arbitrary compactly supported function from $C^\infty((-1,T)\times \So)$.
\end{mydef}


\subsection{Auxiliary results} 
In order to proceed with our global regularity proofs, we need to make sure that we are equipped with sufficient smoothness. To this end, the easiest way is to provide the following short-time regularity results for the modified Burgers equation \eqref{eq:red}. Concerning their proofs, for brevity, we restrict ourselves to providing the appropriate reference and commenting on some minor changes needed. Let us emphasize that from now on, we fix both an arbitrary $T < \infty$ and a particular smooth $ f(t)$, $f: \er \to \er$. Consequently, we do not write dependencies on $f$ and $T$ explicitly. 
Let us begin with  the unique, local-in-time existence of weak solutions.
\begin{lem}\label{th:loc}
Take $Z_0 \in H^s (\So)$  with an $s \ge 1$. Then, there exists $T_* = T_* (|Z_0|_s)$ such that problem \eqref{eq:red} admits a weak solution enjoying
\[
Z \in L^2 (0,T_*; H^{s + \frac{1}{2}}) \cap C( [0,T_*]; H^s).
\]
Moreover, this weak solution is unique in the class
\[
L^\frac{3}{2} (H^1) \cap C(L^2).
\]
\end{lem}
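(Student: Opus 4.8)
\textbf{Proof proposal for Lemma \ref{th:loc}.}

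The plan is to construct the local-in-time solution by a standard parabolic fixed-point / energy argument, treating the term $fZ\pax Z = \frac{f}{2}\pax(Z^2)$ as a perturbation of the linear equation $\pat Z = -\Lambda Z$. First I would set up an approximation scheme, e.g. either a Galerkin truncation in Fourier modes or a mollified iteration $\pat Z^{(n+1)} = -\Lambda Z^{(n+1)} + \frac{f}{2}\pax((Z^{(n)})^2)$ with $Z^{(n+1)}(0) = Z_0$, each step of which is solvable by Duhamel's formula since $e^{-t\Lambda}$ is a bounded analytic semigroup on every $H^\sigma(\So)$ with the smoothing gain $\|e^{-t\Lambda}g\|_{H^{\sigma+\beta}} \lesssim t^{-\beta}\|g\|_{H^\sigma}$. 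The core of the existence proof is then the a priori bound: testing \eqref{eq:red} with $\Lambda^{2s} Z$ and using $\int_\So (\Lambda^s(fZ\pax Z))\,\Lambda^s Z$, one controls the nonlinear contribution via the fractional Leibniz (Kato–Ponce) inequality and the embedding $H^{1/2+\epsilon}(\So)\hookrightarrow L^\infty$ together with interpolation, obtaining
\[
\frac12\frac{d}{dt}|Z|_s^2 + |Z|_{s+\frac12}^2 \le C|f(t)|\,|Z|_s^2\,|Z|_{s+\frac12} + \text{(lower order)} \le \frac12|Z|_{s+\frac12}^2 + C|f(t)|^2|Z|_s^4,
\]
where the dissipation $|Z|_{s+\frac12}^2$ absorbs the top-order derivative loss because $s\ge 1$ leaves room to split $\pax Z$ between the two factors. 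This differential inequality yields $T_* = T_*(|Z_0|_s, \sup_{[0,T]}|f|)$ with $|Z(t)|_s$ controlled on $[0,T_*]$, and after passing to the limit in the scheme (using Aubin–Lions to upgrade weak convergence to strong $L^2$ convergence, which suffices for the quadratic nonlinearity) one gets the asserted regularity $Z\in L^2(0,T_*;H^{s+1/2})\cap C([0,T_*];H^s)$; the continuity in time at the $H^s$ level follows from the standard argument combining weak continuity with convergence of norms.

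For uniqueness in the rougher class $L^{3/2}(H^1)\cap C(L^2)$, I would take two solutions $Z_1, Z_2$ with the same datum, set $W = Z_1 - Z_2$, and derive the equation $\pat W = -\Lambda W + \frac{f}{2}\pax(W(Z_1+Z_2))$. Testing with $W$ in $L^2$ gives
\[
\frac12\frac{d}{dt}|W|_0^2 + |W|_{1/2}^2 = -\frac{f}{2}\int_\So W(Z_1+Z_2)\,\pax W,
\]
and I would estimate the right side by $|f|\,|W|_{L^4}|Z_1+Z_2|_{L^4}|\pax W|_{L^2}$; using $|g|_{L^4(\So)}\lesssim |g|_0^{1/2}|g|_{1/2}^{1/2}$ (Gagliardo–Nirenberg) on both $W$ and $\pax W$ — more precisely bounding $|W|_{L^4}|\pax W|_{L^2}\lesssim |W|_0^{1/2}|W|_{1/2}^{3/2}$ — one arrives at $\frac{d}{dt}|W|_0^2 + |W|_{1/2}^2 \lesssim |f|\,|Z_1+Z_2|_{L^4}\,|W|_0^{1/2}|W|_{1/2}^{3/2}$. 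Absorbing the $|W|_{1/2}^{3/2}$ into the dissipation by Young's inequality with exponents $(4/3, 4)$ leaves a factor $|f|^4|Z_1+Z_2|_{L^4}^4\,|W|_0^2$ on the right, and since $|Z_i|_{L^4}\lesssim |Z_i|_1 \in L^{3/2}_t \subset L^1_t$... one checks the power: $|Z_i|_{L^4}^4$ needs to be in $L^1_t$, which requires $|Z_i|_1\in L^4_t$, not merely $L^{3/2}_t$. The correct split is to keep one derivative-free factor: bound the right side by $|f|\,|W|_{L^\infty}|Z_1+Z_2|_{L^2}|\pax W|_{L^2}$ is not available either since $W\notin L^\infty$; instead integrate by parts once to put the derivative on $Z_1+Z_2$, writing the nonlinear term as $-\frac{f}{2}\int W^2 \pax(Z_1+Z_2)/2 \cdot(\dots)$ — this is the delicate bookkeeping step. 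A cleaner route: estimate $|\int_\So W(Z_1+Z_2)\pax W|\le |Z_1+Z_2|_{L^\infty}|W|_0|\pax W|$, but $Z_i\notin L^\infty$ in the uniqueness class either; so one uses $|Z_1+Z_2|_{H^1}\hookrightarrow$ (nothing on $\So$ at criticality). The resolution is the interpolation $|W(Z_1+Z_2)\pax W|$-estimate via $|W|_{1/2}$ on both rough factors, yielding a Grönwall coefficient $|f(t)|^{4/3}|Z_1(t)+Z_2(t)|_1^{4/3}$, and $\tfrac43 \cdot \tfrac{3}{2}\cdot$... indeed $|Z_i|_1^{4/3}$ is integrable precisely because $Z_i\in L^{3/2}(H^1)$ and $\tfrac43 < \tfrac32$. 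Grönwall then forces $W\equiv 0$.

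The main obstacle is the uniqueness part in the weak class $L^{3/2}(H^1)\cap C(L^2)$: because the problem is critical (the dissipation $\Lambda$ gains exactly half a derivative while the nonlinearity $\pax(Z^2)$ costs one), the energy estimate for the difference $W$ must be balanced with essentially no slack, and one has to allocate fractional derivatives between $W$ and $Z_1+Z_2$ so that the resulting time-weight $|Z_1+Z_2|_1^p$ lands in $L^1_t$ with the exponent $p$ dictated exactly by the assumed $L^{3/2}$-integrability — this is why the hypothesis is $L^{3/2}$ and not $L^2$. The existence part, by contrast, is routine once the a priori $H^s$ bound with the dissipative absorption is in hand; the only care needed there is checking that $s\ge 1$ gives enough room for the Kato–Ponce splitting of $\pax(Z^2)$ so that the half-derivative dissipation suffices.
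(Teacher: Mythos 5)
The paper does not actually prove this lemma: it simply cites Theorems~2.5 and~2.8 of \cite{KNS} and notes that a prefixed smooth $f(t)$ and finite $T$ change nothing. Your proposal is therefore a genuinely different route in that it tries to be self-contained, and the \emph{existence} half is fine in outline: the a priori inequality $\tfrac12\tfrac{d}{dt}|Z|_s^2+|Z|_{s+1/2}^2\le C|f|\,|Z|_s^2|Z|_{s+1/2}$ is correct and achievable for $s\ge1$ (e.g.\ via the commutator splitting $\Lambda^s(Z\pax Z)=[\Lambda^s,Z]\pax Z+Z\pax\Lambda^sZ$, the $L^4$ estimate for $\int \pax Z\,(\Lambda^sZ)^2$, and a bilinear commutator bound), and Young plus Gr\"onwall then give $T_*=T_*(|Z_0|_s)$; this is essentially the KNS argument.

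The uniqueness half has a genuine gap. First, your intermediate bound $|W|_{L^4}|\pax W|_{L^2}\lesssim|W|_0^{1/2}|W|_{1/2}^{3/2}$ is false: $|\pax W|_{L^2}=|W|_{\dot H^1}$ is a strictly higher norm than the dissipation $|W|_{1/2}$ and cannot be interpolated between $|W|_0$ and $|W|_{1/2}$. The correct move (which you gesture at but do not complete) is $\int WS\pax W=-\tfrac12\int W^2\pax S$ with $S=Z_1+Z_2$, giving the sharp estimate $|{\int W^2\pax S}|\lesssim|S|_1|W|_0|W|_{1/2}$; but then Young's inequality, which must leave exactly $|W|_{1/2}^2$ to be absorbed, forces the Gr\"onwall coefficient $|f|^2|S|_1^2$, i.e.\ it requires $S\in L^2(H^1)$ --- the scaling-critical class --- not $L^{3/2}(H^1)$. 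Your claimed coefficient $|f|^{4/3}|S|_1^{4/3}$ is asserted without derivation and is not obtainable by reallocating fractional derivatives: putting $\theta\in(\tfrac12,1]$ derivatives on $S$ and $1-\theta$ on $W^2$ yields a term $\lesssim|S|_{\dot H^\theta}|W|_0^{2\theta-1}|W|_{1/2}^{3-2\theta}$, whose absorption requires $|S|_{\dot H^\theta}^{2/(2\theta-1)}\in L^1_t$, i.e.\ $S\in L^{2/(2\theta-1)}(H^\theta)$, whereas interpolating the hypotheses $C(L^2)\cap L^{3/2}(H^1)$ only gives $S\in L^{3/(2\theta)}(H^\theta)$, and $\tfrac{3}{2\theta}<\tfrac{2}{2\theta-1}$ for every $\theta\le1$. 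So uniqueness in the stated (supercritical) class cannot come from this energy scheme alone; it is exactly the nontrivial content of Theorem~2.8 of \cite{KNS}, which must be either cited or reproduced by a different mechanism. Note also that even if one settles for a sublinear power of $|W|_0^2$ on the right-hand side, Gr\"onwall/Osgood no longer forces $W\equiv0$, so the bookkeeping must land precisely on a linear-in-$|W|_0^2$ inequality with an $L^1_t$ coefficient.
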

For $f \equiv 1$, this is Theorem 2.5 (existence of a weak solution) and Theorem 2.8 with $\delta = 1$ (uniqueness) of \cite{KNS}. Allowing for a non-constant $f (t)$ does not change these proofs, since smooth $f: \er \to \er $ and $T < \infty$ are prefixed.

Next, we state a simple continuation criterion 
\begin{lem}\label{burg:blowup}
Take a weak, local in time solution $Z$ to \eqref{eq:red} with its existence time $T_*>0$. If there exists $C$ such that $| Z(t)|_1 \le C$ on $[0,T_*)$, then for an  $\eps>0$, $Z$ can be continued beyond $T_*$ up to  $T_*+ \eps$.
\end{lem}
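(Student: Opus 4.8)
The plan is to prove the continuation criterion of Lemma \ref{burg:blowup} by the standard strategy: show that the local existence time $T_*=T_*(|Z_0|_s)$ furnished by Lemma \ref{th:loc} only depends on a norm that our hypothesis controls uniformly, so the solution can be restarted from a time close to $T_*$ and extended. The subtlety is that Lemma \ref{th:loc} gives a local time depending on $|Z_0|_s$ with $s\ge 1$, whereas the hypothesis of Lemma \ref{burg:blowup} only bounds $|Z(t)|_1$. So the real content is a \emph{conditional} higher-order estimate: as long as $|Z(t)|_1\le C$ on $[0,T_*)$, all higher Sobolev norms $|Z(t)|_s$ stay bounded on $[0,T_*)$ (possibly with a bound that degenerates as $t\to 0$ if $Z_0$ is only $H^1$, but that does not matter, because by the parabolic smoothing in Lemma \ref{th:loc} the solution already lies in $H^s$ for every $s$ at any positive time).

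First I would fix a time $t_0\in(0,T_*)$; by Lemma \ref{th:loc} (the smoothing $Z\in L^2(0,T_*;H^{s+\frac12})$ together with, after bootstrapping, $C^\infty$ regularity for $t>0$) we have $Z(t_0)\in H^s$ for any $s\ge 1$. Next I would run an energy estimate in $H^s$ on $[t_0,T_*)$. Differentiating, testing $\pat Z=-\Lambda Z+fZ\pax Z$ against $\Lambda^{2s}Z$ and integrating by parts gives
\[
\frac{1}{2}\frac{d}{dt}|Z|_s^2 + |Z|_{s+\frac12}^2 = \int_{\So} f\, \Lambda^s(Z\pax Z)\,\Lambda^s Z\, dx .
\]
The right-hand side is the usual commutator/product estimate: using the fractional Leibniz (Kato–Ponce) inequality, $|f|\le \sup_{[0,T]}|f|=:F$, and interpolation, one bounds it by $CF\,|Z|_\infty\,|Z|_{s+\frac12}|Z|_s + CF\,|\pax Z|_\infty |Z|_s^2$, and then by the one-dimensional Sobolev embedding $|Z|_\infty, |\pax Z|_\infty \lesssim |Z|_{s+\frac12}^{\theta}|Z|_1^{1-\theta}$-type interpolation (valid for $s$ large enough, and one may simply take $s$ as large as needed) together with Young's inequality, absorb the highest-order factor into the dissipation $|Z|_{s+\frac12}^2$. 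What remains is $\frac{d}{dt}|Z|_s^2 \le P(|Z|_1)\,|Z|_s^2$ for a polynomial $P$; since $|Z(t)|_1\le C$ by hypothesis, Grönwall gives $|Z(t)|_s\le |Z(t_0)|_s\,e^{P(C)(T_*-t_0)}=:M<\infty$ on $[t_0,T_*)$.

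Finally, choose any $t_1\in[t_0,T_*)$. Then $|Z(t_1)|_s\le M$, so Lemma \ref{th:loc} produces a weak solution on $[t_1,t_1+T_*(M)]$ with $Z\in C([t_1,t_1+T_*(M)];H^s)$; note $T_*(M)$ is independent of $t_1$. Picking $t_1$ with $T_*-t_1<T_*(M)$ and gluing this continuation to the original solution via the uniqueness statement of Lemma \ref{th:loc} (the two solutions agree on the overlap, so the concatenation is again a weak solution), we extend $Z$ past $T_*$, up to $t_1+T_*(M)>T_*$, which gives the desired $\eps=t_1+T_*(M)-T_*>0$. The main obstacle is the conditional energy estimate: one must make sure the nonlinear term $\int f\Lambda^s(Z\pax Z)\Lambda^s Z$ is genuinely controlled by $|Z|_1$ and the dissipation alone, which in dimension one is comfortable because $\Lambda$ is a full half-derivative and the worst term $Z\Lambda Z$ is only marginally supercritical relative to $\dot H^{1/2}$; taking $s$ large and interpolating down to $|Z|_1$ handles it cleanly. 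Uniqueness in the stated class is quoted from Lemma \ref{th:loc}, so the gluing step needs no further argument.
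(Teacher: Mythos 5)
Your final step --- restart at $t_1$ close to $T_*$ and glue via the uniqueness class of Lemma \ref{th:loc} --- is exactly the paper's (one-line) proof. But you have manufactured a difficulty that is not there: Lemma \ref{th:loc} is stated for $Z_0\in H^s$ with \emph{any} $s\ge 1$, in particular $s=1$, so the local existence time $T_*(|Z_0|_1)$ depends precisely on the norm that the hypothesis $|Z(t)|_1\le C$ controls uniformly. Choosing $t_1$ with $T_*-t_1<T_{\mathrm{loc}}(C)$ and invoking uniqueness in $L^{3/2}(H^1)\cap C(L^2)$ finishes the proof; the entire conditional $H^s$ energy estimate, which is the bulk of your argument, is unnecessary. (If one wants the continuation in $H^s$ for $s>1$ rather than merely as a weak solution, that is supplied by the smoothing in Lemma \ref{th:loc} / Lemma \ref{bur:analytic} after the restart, not by an a priori bound before $T_*$.)

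Since you do route the argument through that estimate, one imprecision there is worth flagging: the bound $CF\,|Z|_\infty|Z|_{s+\frac12}|Z|_s$ is not what a naive Kato--Ponce application yields. Moving half a derivative onto the test function gives $|\Lambda^{s-\frac12}(Z\pax Z)|_{0}\,|Z|_{s+\frac12}\lesssim |Z|_\infty|Z|_{s+\frac12}^2+\dots$, and the term $|Z|_\infty|Z|_{s+\frac12}^2$ cannot be absorbed into the dissipation unless $|Z|_\infty$ is small --- this is precisely the criticality that forces \cite{KNS} to use moduli of continuity for the global problem. To close the conditional estimate one must exploit the transport structure, writing $\int\Lambda^s(Z\pax Z)\Lambda^s Z=\int[\Lambda^s,Z]\pax Z\,\Lambda^s Z-\tfrac12\int\pax Z\,(\Lambda^s Z)^2$, which gives the benign bound $C|\pax Z|_\infty|Z|_s^2$ that your interpolation-plus-Young step then handles. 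With that correction your detour is sound, but the direct $s=1$ restart remains the shorter and intended route.
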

The proof comes down to restarting our evolution an instant before $T_*$ and use of Lemma \ref{th:loc}.

Finally, we arrive at a conditional smoothness result.
\begin{lem}\label{bur:analytic}
If $| Z(t)|_1 \le C$ on $[0,T)$, then the solution $Z$ to \eqref{eq:red} given by Lemma \ref{th:loc} satisfies
\[Z \in C^\infty ( (0, T) \times  \So).
\]
\end{lem}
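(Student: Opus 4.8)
The statement is a conditional regularity result: assuming the a priori bound $|Z(t)|_1 \le C$ on $[0,T)$, upgrade the solution from Lemma \ref{th:loc} to $C^\infty((0,T)\times\So)$. The plan is a bootstrap in Sobolev scale, turning the $H^1$ bound into $H^s$ bounds for all $s$, and then invoking parabolic smoothing in time. First I would fix $t_0 \in (0,T)$ and work on a subinterval $[t_0, T)$, using the $H^1$ bound together with Lemma \ref{burg:blowup} to ensure the solution exists and lies in $C([t_0,T);H^1)\cap L^2(H^{3/2})$. The half-laplacian $\Lambda$ provides a gain of half a derivative, so the key is to run an energy estimate for $|Z(t)|_s^2$ and show it stays finite on $[t_0+\eta, T)$ for each $\eta>0$, using that $Z$ already belongs to a slightly better space on a starting slice (parabolic smoothing at the first step).

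\textbf{Key steps.} The heart is the differential inequality for higher Sobolev norms. Differentiating $\tfrac12\tfrac{d}{dt}|Z|_s^2$ against $\eqref{eq:red}$ gives $-|Z|_{s+1/2}^2 + \int \Lambda^s(fZ\pax Z)\,\Lambda^s Z$. The nonlinear term is estimated by commutator (Kato--Ponce) and product estimates: $\int \Lambda^s(fZ\pax Z)\Lambda^s Z \lesssim |f(t)|\,(|Z|_\infty |Z|_{s+1/2}|Z|_s + \text{l.o.t.})$, and then I would absorb the top-order piece using $|Z|_\infty \lesssim |Z|_{1/2+\delta}$ interpolated between the already-controlled $|Z|_1$ and $|Z|_{s+1/2}$, together with Young's inequality so the $|Z|_{s+1/2}^2$ on the right is dominated by the dissipation. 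This yields $\tfrac{d}{dt}|Z|_s^2 \le -\tfrac12|Z|_{s+1/2}^2 + C_s(\sup_{[0,T]}|f|,C)\,|Z|_s^2$, hence Grönwall gives a bound on $[t_0,T)$ provided $|Z(t_0)|_s < \infty$. To secure the finiteness of $|Z(t_0)|_s$ one argues inductively: from $Z \in L^2(t_0',T;H^{s+1/2})$ there is a time $t_0 \in (t_0',T)$ with $Z(t_0) \in H^{s+1/2}$, so one can start the next rung of the ladder. Iterating over $s = 1, \tfrac32, 2, \dots$ yields $Z \in C((t_0,T);H^s)$ for every $s$, i.e.\ $Z(t,\cdot) \in C^\infty(\So)$ for $t \in (0,T)$. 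Finally, smoothness in time (joint $C^\infty$) follows from the equation itself: $\pat Z = -\Lambda Z + fZ\pax Z$ expresses $\pat Z$ in terms of spatially smooth quantities and smooth $f$, and differentiating the equation in $t$ repeatedly (each time the right side lies in every $H^s$, locally uniformly in $t$) bootstraps all mixed space-time derivatives; standard parabolic estimates for the linear problem $\pat w + \Lambda w = g$ with smooth $g$ close the argument.

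\textbf{Main obstacle.} The delicate point is the top-order nonlinear estimate: with only half a derivative of smoothing from $\Lambda$, the term $fZ\pax Z$ at Sobolev level $s$ involves $s+1$ derivatives, which is one derivative \emph{above} the natural dissipation level $s+1/2$. The commutator structure is essential here — one must write $\Lambda^s(Z\pax Z) = Z\Lambda^s\pax Z + [\Lambda^s,Z]\pax Z$ and observe that the ``bad'' term $\int Z\Lambda^s\pax Z\,\Lambda^s Z = \tfrac12\int Z\pax((\Lambda^s Z)^2) = -\tfrac12\int \pax Z\,(\Lambda^s Z)^2$ is only of order $s$ (an integration by parts removes the extra derivative), and is then controlled by $|\pax Z|_\infty |Z|_s^2 \lesssim |Z|_{3/2+\delta}|Z|_s^2$, interpolating $|Z|_{3/2+\delta}$ between $|Z|_1$ and $|Z|_{s+1/2}$ and using Young. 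The commutator $[\Lambda^s,Z]\pax Z$ is handled by Kato--Ponce, again with one factor measured in $|Z|_\infty$ or $|\pax Z|_\infty$ and absorbed via interpolation. Making the interpolation exponents close (so that the $|Z|_{s+1/2}^2$ contribution carries a small coefficient) is where the critical nature of $\alpha=1$ shows up; it works, but with no slack, exactly as in the critical Burgers energy method. Everything else — the choice of starting slices $t_0$, Grönwall, the time-regularity bootstrap — is routine once this estimate is in place.
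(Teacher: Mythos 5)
Your proof is correct, but it takes a different route from the paper. The paper's argument for Lemma \ref{bur:analytic} is two lines long: the hypothesis $|Z(t)|_1\le C$ together with the continuation criterion (Lemma \ref{burg:blowup}) shows the weak solution of Lemma \ref{th:loc} extends to all of $[0,T)$, and then smoothness for positive times is obtained by citing Corollary 2.6 of Kiselev--Nazarov--Shterenberg \cite{KNS}, with the remark that the time-dependent prefactor $f(t)$ (smooth, on a prefixed finite horizon) changes nothing in their proof. You instead reprove the smoothing from scratch by a Sobolev ladder: the commutator decomposition $\Lambda^s(Z\pax Z)=Z\Lambda^s\pax Z+[\Lambda^s,Z]\pax Z$, the integration by parts that reduces the top-order piece to $-\tfrac12\int\pax Z\,(\Lambda^s Z)^2$, Kato--Ponce for the commutator, and interpolation of $|\pax Z|_\infty\lesssim|Z|_{3/2+\delta}$ between the assumed $H^1$ bound and the dissipation $|Z|_{s+1/2}$, followed by Young and Gr\"onwall, with the starting slices $Z(t_0)\in H^{s+1/2}$ supplied a.e.\ by the $L^2(H^{s+1/2})$ bound of the previous rung; time regularity then bootstraps from the equation. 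This closes: the total exponent of $|Z|_{s+1/2}$ in your nonlinear bound is $(2s-\tfrac32+\delta)/(s-\tfrac12)<2$ for $\delta<\tfrac12$, so the absorption works --- in fact with some slack, since the a priori bound is in $H^1$, strictly above the scaling-critical $\dot H^{1/2}$; your remark that there is ``no slack'' is slightly overstated but harmless. What your approach buys is a self-contained proof that does not lean on the precise statement of \cite{KNS}; what the paper's approach buys is brevity and, via the KNS corollary, potentially stronger (analytic) regularity. Both are valid proofs of the stated lemma.
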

Using $| Z(t)|_1 \le C$ on $[0,T)$ and Lemma \ref{burg:blowup}, we know that the weak solution to \eqref{eq:red} can be continued up to $T$. Now, smoothness  follows from Corollary 2.6 of \cite{KNS}, up to minor differences concerning  $f (t)$, that we can deal with as before. 

The above results are not optimal in relation  to underlying spaces: $H^s$ with $s \ge 1$ can be relaxed to $s > \frac{1}{2}$, compare \cite{KNS}. Such relaxation is not needed for our purposes, because $s=1$ here leads to the interesting for us $L^2$ data in the Keller-Segel system \eqref{1_ks_u} - \eqref{1_ks_v}.
\section{Proof of Theorem \ref{th}} \label{sec:proofs1}

The main steps of proof of Theorem \ref{th} are as follows
\begin{enumerate}
\item Restating considerations for Keller-Segel equations as considerations for a fractional Burgers-type equation \eqref{eq:red}. Roughly speaking, equation \eqref{eq:red} serves as an equation for the primitive function of $u$ solving \eqref{1_ks_u}.
\item Using smoothness of solutions to \eqref{eq:red} provided by Theorem \ref{th2}.
\end{enumerate}
Theorem \ref{th2} will be proved in Section \ref{sec:proofs2}. Here in Section \ref{sec:proofs1} we assume that we have it at our disposal.

To fix ideas, let us choose a reference point $0$ on $\So$ and denote its half-length with $L$, \emph{i.e.} we work with the periodic torus $\T = [-L, L]$. 

\subsection{A primitive-type equation}
Take $u_0 \in H^s$, $s \ge 0$. Recall that by definition $m = \dashint_{-L}^L   u_0 ( y) dy$. Let us choose
\begin{equation}\label{bur:choi}
\begin{aligned}
f(t) &:= \chi  e^{\chi m  t}   ,\\
Z (0, x) &:=   \int_{-L}^x u_0 ( y) dy - m (x + L) + c,
\end{aligned}
\end{equation}
where
\[
c =  L m - \dashint_{-L}^L \int_{-L}^x u_0 ( y) dy dx 
\]
is selected so that
\[
\int_{-L}^L Z (0, x) dx = 0.
\]
Observe that  $Z (0, x)$ is periodic and belongs to $H^{s+1}$. Hence,
 Theorem \ref{th2} implies that problem  \eqref{eq:red} with the choice \eqref{bur:choi} admits a unique, smooth solution 
\[Z \in C( [0,T]; H^{s+1} (\So)) \cap C^\infty ( (0, T) \times \So).
\]
By integration of \eqref{eq:red} in space we see that $\int_\So Z(x, t) dx \equiv 0$.

\subsection{Change of variables} Formula
\begin{equation}\label{bur:wz}
 W (x,t) :=  e^{\chi m t} Z (x,t)
\end{equation}
provides a one-to-one correspondence between solutions of  \eqref{eq:red} with the choice \eqref{bur:choi}  and 
\[W \in C( [0,T]; H^{s+1} (\So)) \cap C^\infty ( (0, T) \times \So)
\]
that solves
\begin{equation}\label{1_ks'}
\begin{aligned}
\pat W &=- \Lambda W + {\chi} (\pax W) W + {\chi}m W \quad  \text{ in } \;(0, T) \times  \So,\\
  W (0, x) &=   \int_{-L}^x u_0 ( y) dy - mx - \dashint_{-L}^L \int_{-L}^\xi u_0 ( y) dy d \xi.\\
\end{aligned}
\end{equation} 
The definition \eqref{bur:wz} of $W$ implies  $\int_\So W(x, t) dx \equiv 0$.
\subsection{Recovering solutions to Keller-Segel system}
Let us introduce \begin{equation}\label{smallW} u = \pax W+m.\end{equation} 
Smoothness of $W$ gives \[u \in C( [0,T]; H^s (\So)) \cap C^\infty ( (0, T) \times \So). \] It  solves 
\begin{equation}\label{1_ks'_a}
\begin{aligned}
\pat u &=- \Lambda u + \chi \pax ( u W)   \quad  \text{ in } \;(0, T) \times  \So,\\
w (0, x) &=  u_0 (x),
\end{aligned}
\end{equation} 
since $W$ solves \eqref{1_ks'} and $\pax$ and $\Lambda$ commute. Observe that $\dashint_\So u (x, t) dx \equiv m$.  Function $u$ of \eqref{1_ks'_a} gives in fact solution to  \eqref{1_ks_u}. It becomes fully clear after we recover $v$ solving \eqref{1_ks_v}. To this end, let $V$ be a solution to
\begin{equation}\label{largeV}
-\pax^2 V (x, t) = W (x,t)   \quad  \text{ in } \;(0, T) \times  \So,
\end{equation}
where  $W$ is an admissible right-hand side, because  $\int_\So W(x, t) dx \equiv 0$.
Hence $ v := \pax V$ is the zero-mean solution of
\begin{equation}\label{1_ks'_b}
-\pax^2 v = u - m   \quad  \text{ in } \;(0, T) \times  \So, 
\end{equation}
compare \eqref{smallW}. By the definition  $ v= \pax V$ and \eqref{largeV}, we get
$
 \pax v = -W.
$ Plugging this in  \eqref{1_ks'_a} and looking at \eqref{1_ks'_b} yields that \[
\begin{aligned}
u &\in C( [0,T]; H^s(\So)) \cap C^\infty ( (0, T) \times \So), \\
v &\in C( [0,T]; H^{s+2}(\So)) \cap C^\infty ( (0, T) \times \So)
\end{aligned}
\] solve \eqref{1_ks_u} - \eqref{1_ks_v} with the initial condition $u_0$. 

\subsection{Uniqueness} Since Theorem \ref{th2} provides uniqueness of $Z$ and \eqref{smallW} holds,  uniqueness of $u$ solving  \eqref{1_ks_u} follows. Since $v$ is the zero-mean solution of  \eqref{1_ks'_b}, it is also unique.

\section{Proof of Theorem \ref{th2}}\label{sec:proofs2}
In order to show Theorem \ref{th2} we will  follow the ingenious methodology developed in  \cite{KNS} by Kiselev, Nazarov and Shterenberg for the fractional Burgers equation. Our case differs from theirs by our lack of scaling and, secondly, by involving slightly stronger destabilizing term  $f Z\pax Z$ as compared to  $Z\pax Z$.

It could have  been sufficient to say that we follow the lines of the respective part of \cite{KNS}, whereas we deal with lack of staling by constructing \emph{at once} an entire family of moduli of continuity, so that any (sufficiently smooth) initial datum enjoys one of them. Nevertheless, we provide rigorous proofs in what follows. One motivation is completeness. A more important one is to provide details of the method for problems with no scaling, that may be useful in other applications. In this context let us remark the following technical difficulty. Using a one-parameter family of moduli of continuity as in   \cite{KNS}, which is the most natural approach suggested by the scaling of the full-space case, turned out to be insufficient. We need to introduce another parameter $N$, that divides the middle and large arguments of moduli of continuity (see formula \eqref{cpl_ini2}).   
\subsection{Reduction of the smoothness problem  to keeping a modulus of continuity.}
As indicated before, the proof relies on a construction of special moduli of continuity (in space). Hence let us first introduce
\begin{mydef}
We denote by $\mathcal{O}$ the class of moduli of continuity $\omega: \er_+ \to \er_+$ (non-decreasing, concave functions with $\omega(0)=0$), whose derivatives satisfy
\[\omega' (\xi) \text{ is continuous at } 0, \qquad  \lim_{\xi \to 0}  \omega'' (\xi) = -\infty.
\]
\end{mydef}
Recall that a function $f$ has modulus of continuity $\omega$ if
$$
|f(x+h)-f(x)|\leq \omega(|h|).
$$
It holds (compare Lemma 3.3 of \cite{KNS})
\begin{prop}\label{prop:1}
If a smooth real function $f$ has a modulus of continuity $\omega \in \mathcal{O}$, then
\begin{equation}\label{omega'}
|\partial_x f|_\infty < \omega' (0).
\end{equation}
\end{prop}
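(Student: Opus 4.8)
\textbf{Proof plan for Proposition \ref{prop:1}.} The statement is a standard consequence of the definition of the class $\mathcal{O}$: one wants to bound the sup of $\partial_x f$ in terms of $\omega'(0)$, with a strict inequality coming from the concavity (more precisely from $\omega''(\xi)\to-\infty$ as $\xi\to 0$). The plan is to estimate a single difference quotient of $f$ and then pass to the limit.

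Fix a point $x$ and a small $h>0$. Since $f$ has modulus of continuity $\omega$, we have
\[
|f(x+h)-f(x)| \le \omega(h),
\]
so $|\partial_x f(x)| = \lim_{h\to 0^+} \frac{|f(x+h)-f(x)|}{h} \le \lim_{h\to 0^+}\frac{\omega(h)}{h}$. Because $\omega(0)=0$ and $\omega'$ is continuous at $0$, l'Hôpital (or a first-order Taylor expansion of $\omega$) gives $\lim_{h\to 0^+}\frac{\omega(h)}{h} = \omega'(0)$, hence $|\partial_x f|_\infty \le \omega'(0)$. This already yields the non-strict inequality; the remaining point is to upgrade it to a strict one.

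To get strictness, I would use the condition $\lim_{\xi\to 0}\omega''(\xi)=-\infty$. Concavity of $\omega$ gives, for any $h>0$,
\[
\frac{\omega(h)}{h} = \frac{1}{h}\int_0^h \omega'(\xi)\,d\xi \le \omega'(\xi_0)
\]
for any fixed $\xi_0 \le h$ (since $\omega'$ is non-increasing), but more usefully: $\omega'(0)-\frac{\omega(h)}{h} = \frac{1}{h}\int_0^h\bigl(\omega'(0)-\omega'(\xi)\bigr)d\xi = \frac{1}{h}\int_0^h\!\int_0^\xi (-\omega''(\eta))\,d\eta\,d\xi$. Since $-\omega''(\eta)\to+\infty$ as $\eta\to 0$, there is $\delta>0$ with $-\omega''(\eta)\ge 1$ for $\eta\in(0,\delta)$, so for $h<\delta$ this double integral is at least $\frac{1}{h}\cdot\frac{h^2}{2} = \frac{h}{2}>0$; in particular $\frac{\omega(h)}{h} \le \omega'(0) - \frac{h}{2} < \omega'(0)$ for all small $h>0$. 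Therefore for every $x$,
\[
|\partial_x f(x)| = \lim_{h\to 0^+}\frac{|f(x+h)-f(x)|}{h} \le \liminf_{h\to 0^+}\frac{\omega(h)}{h} < \omega'(0),
\]
and taking the supremum over $x$ preserves the strict inequality because the bound $\omega'(0)-h/2$ is independent of $x$. (If one prefers to avoid any subtlety about the sup of a family of strict inequalities, note that for $f$ periodic and smooth the sup of $|\partial_x f|$ is attained, so strictness at the maximizing point suffices.)

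\textbf{Main obstacle.} The only genuinely delicate point is the passage from a pointwise strict inequality for each $x$ to a strict inequality for the supremum; this is handled by observing that the gap $\omega'(0)-\omega(h)/h \ge h/2$ for small $h$ is uniform in $x$, or alternatively by invoking attainment of the supremum in the smooth periodic setting. Everything else is a routine application of Taylor's theorem and the concavity of $\omega$. This is essentially Lemma 3.3 of \cite{KNS}; I would simply cite that and record the short argument above for completeness.
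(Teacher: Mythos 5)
There is a genuine gap in your strictness argument. Your final chain of inequalities ends with the claim $\liminf_{h\to 0^+}\omega(h)/h < \omega'(0)$, but this contradicts your own first paragraph: since $\omega(0)=0$ and $\omega'$ is continuous at $0$, one has $\lim_{h\to 0^+}\omega(h)/h = \omega'(0)$ exactly. The quantitative gap you establish, $\omega(h)/h \le \omega'(0)-h/2$, vanishes in the limit $h\to 0^+$, so passing to the limit in $|f(x+h)-f(x)|/h \le \omega(h)/h$ only recovers the non-strict bound $|\partial_x f(x)|\le\omega'(0)$. A telling symptom is that your argument never uses smoothness of $f$ beyond first-order differentiability — yet the strict inequality genuinely needs the second derivative of $f$: a merely $C^1$ function can realize $f'(x)=\omega'(0)$ (e.g.\ $f$ behaving like $\omega$ itself near a point), and what rules this out for smooth $f$ is precisely that $\partial_x^2 f$ is bounded while $\omega''\to-\infty$.

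The paper's proof makes exactly this comparison: it Taylor-expands \emph{both} sides to second order at a fixed small $h_0>0$ (not in the limit $h\to 0$), obtaining $|\partial_x f(x)h| \le \omega'(h)h + \omega''(|\eta_1|)\tfrac{h^2}{2} + |\partial_\eta^2 f(\eta)|\tfrac{h^2}{2}$, and then chooses $h_0$ so small that $\sup_{(0,h_0]}\omega'' + \sup|\partial_x^2 f| < 0$, which is possible because $\omega''\to-\infty$ while $\sup|\partial_x^2 f|<\infty$. This yields $|\partial_x f|_\infty < \omega'(h_0) \le \omega'(0)$ with a gap that is uniform in $x$, so your concern about suprema of strict inequalities is indeed handled, but only because the strict pointwise bound comes with a uniform margin at the fixed scale $h_0$. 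Your argument can be repaired along the same lines: replace the limit $h\to 0^+$ by the two-sided second-order estimate $|\partial_x f(x)| \le \tfrac{|f(x+h)-f(x)|}{h} + \tfrac{h}{2}\sup|\partial_x^2 f| \le \omega'(0) + \tfrac{h}{2}\bigl(\sup|\partial_x^2 f| + \sup_{(0,h)}\omega''\bigr)$ and pick $h$ small enough that the bracket is negative; note that your normalization $-\omega''\ge 1$ is not sufficient for this, since one needs $-\omega''$ to exceed $\sup|\partial_x^2 f|$, which may be large.
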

\begin{proof}
Assumption $|f(x+h)-f(x)|\leq \omega(|h|)$ and the Taylor formula implies for an arbitrary $x$
\[
\left|\partial_x f(x) h +\partial^2_\eta f (\eta) \frac{h^2}{2} \right|\leq \omega'(|h|)h + \omega'' (|\eta_1|) \frac{h^2}{2}
\]
with $\eta, \eta_1$ in-between $0$ and $|h|$, because $\omega (0) = 0$. Consequently
\begin{equation}\label{omega'a}
\left|\partial_x f(x) h  \right|\leq \omega'(|h|) h + \omega'' (|\eta_1|) \frac{h^2}{2} + \left| \partial^2_\eta f (\eta) \frac{h^2}{2}  \right|.
\end{equation}
 Since $ \lim_{\xi \to 0}  \omega'' (\xi) = -\infty$ and $\sup_{\eta \in [0, |h|]} |\pax^2 f (\eta)| < \infty$  (due to its smoothness) one may find such a small $h_0>0$  that   \[ \sup_{|\eta_1| \in [0, h_0]} \omega'' (|\eta_1|) + \sup_{\eta \in [0, h_0]} |\pax^2 f (\eta)| <0.\] 
This used in \eqref{omega'a} yields $|\partial_x f|_\infty h_0 < \omega' (h_0) h_0$. Division by $h_0$ and concavity of $\omega$ imply \eqref{omega'}.
\end{proof}
Next, let us show that if a solution of \eqref{eq:red} keeps a modulus of continuity $\omega \in  \mathcal{O}$, then it remains smooth.
\begin{lem}\label{reduction}
Assume that  a zero-mean solution $Z(t)$ to \eqref{eq:red} satisfies
$$
Z(t_0) \in C^\infty (\So)
$$
and there exists $\omega \in  \mathcal{O} $ such that 
$$
|Z (t,x) - Z (t,y)| \le \omega (|x-y|) \quad \text{ on } \quad [t_0, t_1] \times  \So,
$$
where $0 \le t_0 \le  t_1 < \infty$.
Then, there exists $\eps > 0$ such that $Z(t) \in C^\infty (\So)$ on $[t_0, t_1+ \eps]$.
\end{lem}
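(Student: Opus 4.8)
The plan is to leverage the conditional smoothness result Lemma \ref{bur:analytic}, which states that an $H^1$-bound on $[t_0,t_1)$ already forces $Z \in C^\infty((t_0,t_1)\times\So)$, together with the continuation criterion Lemma \ref{burg:blowup}. So the first step is to extract an $H^1$-bound from the modulus of continuity hypothesis. By Proposition \ref{prop:1}, whenever $Z(t,\cdot)$ has modulus of continuity $\omega\in\mathcal O$ and is smooth, we get $|\pax Z(t)|_\infty < \omega'(0)$ uniformly in $t\in[t_0,t_1]$; since $\So$ has finite measure this gives $|\pax Z(t)|_0 \le C|\pax Z(t)|_\infty \le C\omega'(0)$. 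Combined with control of $|Z(t)|_0$ — which follows from the zero-mean property and Poincar\'e, $|Z(t)|_0 \le C|\pax Z(t)|_0$, or alternatively from the weak-solution energy/maximum-principle structure — we obtain a bound $|Z(t)|_1 \le C$ on $[t_0,t_1]$.

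There is a subtlety to address before invoking Proposition \ref{prop:1}: a priori we only know $Z(t_0)$ is smooth, not $Z(t)$ for $t>t_0$. The clean way around this is to first run Lemma \ref{th:loc} starting from the smooth datum $Z(t_0)\in H^s$ for every $s$, producing a local smooth solution on some $[t_0,t_0+\delta]$; then the modulus-of-continuity hypothesis applies to this smooth solution on $[t_0,t_0+\delta]$, Proposition \ref{prop:1} yields $|\pax Z|_\infty<\omega'(0)$ there, hence an $H^1$ bound there. Then Lemma \ref{bur:analytic} (with continuation via Lemma \ref{burg:blowup}) shows the solution stays smooth as long as the $H^1$-bound persists — and the $H^1$-bound persists for the whole interval $[t_0,t_1]$ precisely because the modulus of continuity is assumed to hold on all of $[t_0,t_1]\times\So$ and Proposition \ref{prop:1} only needs smoothness at the given time slice plus the modulus. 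Finally, applying Lemma \ref{burg:blowup} once more at $t_1$ (the $H^1$-bound extends a bit past $t_1$ by continuity of $t\mapsto |Z(t)|_1$, or simply because $\omega$ still controls it), we continue the smooth solution to $[t_0,t_1+\eps]$ for some $\eps>0$.

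Concretely, the steps in order: (i) use Lemma \ref{th:loc} to get a local-in-time smooth solution emanating from $Z(t_0)$, and note by uniqueness it coincides with $Z$; (ii) on the interval of smoothness, invoke Proposition \ref{prop:1} to turn $\omega\in\mathcal O$ into $|\pax Z(t)|_\infty<\omega'(0)$; (iii) pass to $|Z(t)|_1 \le C$ using finite measure of $\So$ and the zero-mean Poincar\'e inequality; (iv) feed this into Lemma \ref{bur:analytic} / Lemma \ref{burg:blowup} to propagate smoothness up to $t_1$; (v) observe the $H^1$-bound is not yet saturated at $t_1$, so one more application of Lemma \ref{burg:blowup} pushes smoothness to $t_1+\eps$.

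The main obstacle is the bootstrapping circularity in step (ii): Proposition \ref{prop:1} requires $Z(t,\cdot)$ smooth to even write the second-order Taylor expansion, yet smoothness on $(t_0,t_1]$ is what we are trying to prove. The resolution — running the short-time existence theory from $t_0$ so that smoothness is available on a small sub-interval, then using the continuation criterion to march forward while the $H^1$-bound (which the modulus of continuity guarantees \emph{a priori}, independent of how far smoothness has been established) stays in force — is the real content of the argument. Everything else is a routine combination of the already-established auxiliary lemmas, so I would keep those parts brief.
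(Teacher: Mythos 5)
Your proposal is correct and follows essentially the same route as the paper: Proposition \ref{prop:1} converts the modulus into the uniform bound $|\pax Z(t)|_\infty<\omega'(0)$, the zero-mean property upgrades this to an $H^1$ bound, and Lemmas \ref{th:loc}, \ref{bur:analytic} and \ref{burg:blowup} then propagate smoothness past $t_1$. The only (immaterial) difference is bookkeeping: the paper iterates explicitly with the uniform step $T_*(D)$ coming from the $\omega$-induced bound $D$ on $|Z|_1$ at each restart time, whereas you phrase the same marching as a continuation argument, both resting on the identical resolution of the circularity you correctly single out.
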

Recall that $| \cdot |_\infty$ denotes the supremum norm and $| \cdot |_0$ denotes the $L^2$ norm.
\begin{proof}
Once smooth $Z(t_0)$ complies with the modulus $\omega$, formula \eqref{omega'} yields \[
|\pax Z (t_0) |_\infty < \omega' (0).
\]
Hence \[
|\pax Z (t_0) |_0 < \omega' (0)  |\So|^\frac{1}{2}.
\]
The zero mean assumption implies then 
\[
| Z (t_0) |_1 < C \omega' (0)  |\So|^\frac{1}{2} =: D.
\]
Lemma \ref{th:loc} for the initial datum $Z (t_0)$ with Lemma \ref{bur:analytic} give that $Z(t)$ is smooth on $[t_0, t_0 + T_* (D)]$. It keeps the zero-mean over its evolution. By assumption, $Z (t)$ does not violate $\omega$ as long as $t_0 + T_* (D) \le t_1$. In such case, we restart our evolution at $t^{(1)}_0 := t_0 + T_* (D)$ and repeat our considerations. This gives smoothness of $Z$ up to $t^{(2)}_0 := t_0 + 2 T_* (D)$. We can continue this procedure up to $t^{(n+1)}_0$, where $t^{(n)}_0 < t_1 $ and $t^{(n+1)}_0 > t_1$.
\end{proof}
Notice also that the assumption of smoothness of $Z(t_0)$ is in fact not a restriction, since Lemma \ref{bur:analytic} implies that the Burgers problem \eqref{eq:red} enjoys a local-in-time smooth solution for every initial data $Z(0)\in H^1$.

\subsection{Smoothness} 
The following result  is the essential part of this section
 \begin{lem}\label{mainlemma}
Choose any $T < \infty$.
If for a $t_0 \in [0, T]$
\[
Z(t_0) \in C^\infty (\So),
\]
then $Z$ remains smooth on $[t_0, T]$, \emph{i.e.} $Z \in C^\infty([t_0, T] \times \So)$.
\end{lem}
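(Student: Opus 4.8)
The plan is to combine Lemma~\ref{reduction} with the \emph{method of moduli of continuity}: it suffices to exhibit, for each smooth initial profile $Z(t_0)$, a modulus of continuity $\omega \in \mathcal O$ that the solution cannot violate on all of $[t_0, T]$. By Lemma~\ref{reduction}, if $Z(t)$ respects such an $\omega$ up to some $t_1 \le T$ then it stays smooth a bit past $t_1$; so the obstruction to global-in-$[t_0,T]$ smoothness would be a \emph{first} time at which the modulus is broken. One then argues by contradiction: at such a breakthrough time the inequality $|Z(t,x)-Z(t,y)| \le \omega(|x-y|)$ is touched (an equality is achieved in the limit as two points approach an optimal pair), and one shows the time-derivative of the quantity $Z(t,x)-Z(t,y)$ at that configuration is strictly negative, so the modulus could not in fact have been broken.

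\textbf{The dissipative estimate at a touching pair.} Following \cite{KNS}, suppose $x,y$ (with $\xi := |x-y|$) realize $Z(t,x)-Z(t,y)=\omega(\xi)$. The transport-type nonlinearity $fZ\pax Z = \tfrac f2 \pax(Z^2)$ contributes, after differencing, a term controlled by $|f|\,|Z|_\infty \,\omega'(\xi)\,\xi$-type bounds — crucially it is a genuine transport term (no bad diffusion hidden in it, in contrast to \eqref{eqcccf}), so it only helps or costs a controlled amount; here $|f|$ is bounded on $[0,T]$ since $f$ is smooth and $T<\infty$. The fractional dissipation $-\Lambda$ contributes, via the pointwise kernel representation \eqref{1}, a strictly negative quantity of the classical Kiselev–Nazarov–Shterenberg form, bounded above by (a constant times)
\[
\mathcal D(\xi) := \int_0^{\xi/2} \frac{\omega(\xi+2\eta)+\omega(\xi-2\eta)-2\omega(\xi)}{\eta^2}\,d\eta + \int_{\xi/2}^\infty \frac{\omega(\xi+2\eta)+\omega(\xi-2\eta)-2\omega(\xi)}{\eta^2}\,d\eta < 0 .
\]
The desired breakthrough-prevention inequality reads, schematically,
\[
|f(t)|\,\Omega\,\omega'(\xi)\,\xi \;+\; C\,\mathcal D(\xi) \;<\; 0 \qquad \text{for all } \xi > 0,
\]
where $\Omega$ bounds $|Z|_\infty$ — which itself is controlled because $Z$ has zero mean and modulus $\omega$, so $|Z|_\infty \le \tfrac12 \omega(|\So|)$ — and $|f(t)| \le \sup_{[0,T]} f$.

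\textbf{Construction of the family of moduli; the main obstacle.} Because \eqref{eq:red} has no scaling, a single $\omega$ (or a one-parameter scaling family $\omega_B(\xi)=\omega(B\xi)$ as in \cite{KNS}) will not simultaneously (i) be steep enough at $0$ to dominate $|\pax Z(t_0)|_\infty$ for the given datum and (ii) satisfy the above differential inequality with the $T$-dependent constant $\sup_{[0,T]}|f|$. The plan, as the authors flag via \eqref{cpl_ini2}, is to build a \emph{two-parameter} family: pick $\omega$ concave, linear-like with large slope on small $\xi$ (to absorb the initial derivative of an arbitrary smooth datum — every smooth $Z(t_0)$ has \emph{some} member of the family as a modulus, by choosing the small-scale slope large enough), with $\omega''(0^+)=-\infty$ so that $\omega \in \mathcal O$; then introduce a cutoff scale $N$ separating "middle" from "large" arguments, on which $\omega$ is tuned (e.g.\ nearly constant / logarithmically slowly growing) so that the negative dissipation term $C\mathcal D(\xi)$ beats the transport term uniformly in $\xi$, given the fixed $\sup_{[0,T]}|f|$ and the bound on $|Z|_\infty$. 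The technical heart — and the step I expect to be hardest — is verifying $|f|\Omega\,\omega'(\xi)\xi + C\mathcal D(\xi) < 0$ \emph{across all three regimes} ($\xi$ small, $\xi\sim N$, $\xi$ large), since this is where the competition between the non-scaling nonlinearity and the dissipation is delicate and where the extra parameter $N$ must be chosen correctly (large, depending on $T$ and $f$). Once such a family is in hand, one fixes the member $\omega$ that the smooth datum $Z(t_0)$ satisfies, notes $Z(t_0)$ strictly obeys $\omega$ (strict inequality at $\xi=0$ via Proposition~\ref{prop:1}, hence — by concavity and the differential inequality — no equality can develop), and concludes by the contradiction/continuation argument together with Lemma~\ref{reduction} that $Z$ keeps $\omega$ and stays smooth on all of $[t_0,T]$.
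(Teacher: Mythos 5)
Your overall architecture matches the paper's: reduce to preserving a modulus via Lemma \ref{reduction}, rule out the single-point scenario with Proposition \ref{prop:1}, contradict a two-point breakthrough by showing $\partial_t(Z(x)-Z(y))<0$ there, and build a multi-parameter family of moduli to compensate for the lack of scaling. But there is a genuine gap in the one estimate you leave schematic, and it is precisely the term that dictates the shape of the modulus. At a touching pair the nonlinearity contributes
\[
f\bigl(Z\pax Z(x)-Z\pax Z(y)\bigr)\;=\;f\,\omega'(\xi)\bigl(Z(x)-Z(y)\bigr)\;=\;f\,\omega(\xi)\,\omega'(\xi),
\]
by the first-order conditions $\pax Z(x)=\pax Z(y)=\omega'(\xi)$ at the extremal pair (equivalently, the flow-map computation of \cite{KNS}): the relevant velocity increment is $|f|\,|Z(x)-Z(y)|=|f|\,\omega(\xi)$, not $|f|\,|Z|_\infty\,\xi$. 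Your proposed bound $|f|\,|Z|_\infty\,\omega'(\xi)\,\xi$ is not an upper bound for this quantity: on the steep small-scale part of the modulus one has $\omega(\xi)\approx B\xi$ with $B$ chosen large enough to dominate $|\pax Z(t_0)|_\infty$, so $\omega(\xi)\omega'(\xi)\approx B^2\xi$ exceeds $|Z|_\infty\,\omega'(\xi)\,\xi\approx |Z|_\infty B\xi$ by the large factor $B/|Z|_\infty$. This is not cosmetic: beating $\Gamma\,\omega\,\omega'$ at small $\xi$ is exactly what forces the concave correction in \eqref{eq:fam} and the coupling $K\gtrsim \Gamma\sqrt{B\xi_0}$ of \eqref{cpl_ini3} (\emph{i.e.} the inequality \eqref{eq:iii22:3}, $\xi\omega''+\pi\Gamma\omega\omega'<0$); against your weaker right-hand side this coupling would be invisible, and the verification you defer as ``the technical heart'' would be carried out against the wrong inequality. (Your observation $|Z|_\infty\le\tfrac12\omega(|\So|)$ is correct but is not what enters here.)

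Two smaller points. In the far-field part of the dissipation integral the integrand must read $\omega(2\eta+\xi)-\omega(2\eta-\xi)-2\omega(\xi)$, since for $\eta>\xi/2$ the argument $\xi-2\eta$ is negative. And the extra parameter $N$ does not enter the breakthrough inequality at all in the paper: that inequality has only two regimes, split at $\xi_0$; $N$ is used solely in the initial-data compliance step \eqref{cpl_ini2}, to pass from the linear bound $|\pax Z(t_0)|_\infty\,\xi$ on $[\xi_0,N\xi_0]$ to the sup bound $2|Z(t_0)|_\infty$ beyond $N\xi_0$. Neither of these is fatal, but the mis-estimated nonlinear term is a real error in the key step.
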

Since the proof of Lemma \ref{mainlemma} is comparatively long, we subdivided it into several steps for clarity. In order to make it even more accessible, let us begin with
\subsubsection{Outline of the proof of  Lemma \ref{mainlemma} }
In view of Lemma \ref{reduction}, it is enough to find such $\omega \in \mathcal{O}$ that
\begin{equation}\label{cpl_ev22}
|Z (x, t) - Z(y, t)| \le \omega (|x-y|)
\end{equation}
for $t \ge t_0$.

First, assuming that $
|Z (x, t_0) - Z(y, t_0)| < \omega (|x-y|) $, we will argue that the only way in which $\omega$ can be broken over further time evolution of $Z$, is the \emph{two-point blowup scenario}: at certain time $\tau> t_0$ there exist two distinct points such that  $|Z (x, \tau) - Z(y, \tau)| = \omega (|x-y|)$. 

Next, we will derive inequalities that allow to falsify this scenario, provided a certain refinement of properties of  $\omega$ holds. This in turn will indicate what exact form of $\omega$ is suitable. Consequently, we will construct a specific family of moduli $\{ \omega_{K, B, \xi_0} \}$, such that \eqref{cpl_ev22} is satisfied for a certain choice of parameters ${K, B, \xi_0}$. We could have chosen a shorter way of presenting this point, namely introduce  immediately  the family $\{ \omega_{K, B, \xi_0} \}$ and show that $\omega_{K, B, \xi_0}$ is kept over the evolution. Then, however, there would be no clear rationale for using the  concrete shape of family of moduli of continuity.

Let us notice that we can work here with smooth solutions, which is not \emph{a priori} estimate. Namely, for short times we have smoothness via Lemma \ref{bur:analytic}. Next, as long as a modulus of continuity is conserved, we have smoothness by Lemma \ref{reduction}.
\subsubsection{Admissible blowup scenarios} 
Let us consider any $\omega \in \mathcal{O}$ such that 
\begin{equation}\label{cpl_ev0}
|Z (x, t_0) - Z(y, t_0)| < \omega  (|x-y|).
\end{equation}
Assume that 
\begin{equation}\label{ass:au}
Z \; \text{ looses the modulus of continuity $\omega$ at a certain time } t \in (t_0, T].
\end{equation} Let us take  such $\tau$ that 
\begin{equation}\label{eq:tau}
\tau := \sup \{\tau \in [t_0, T]:  |Z (x, t) - Z(y, t)| \le \omega (|x-y|) \}.
\end{equation}
There are now two scenarios of what happens at time $\tau$. Either we have the \emph{two-point blowup scenario}:
\begin{equation}\label{eq:twopt}
\exists y \neq x \text{ such that } |Z (x, \tau) - Z(y, \tau)| = \omega (|x-y|)
\end{equation}
or  we have the \emph{single-point blowup scenario} otherwise. 
\subsubsection{Ruling out the single-point blowup scenario} 
Roughly speaking, the  single-point blowup scenario is realized when for $t_n \to \tau$, $x_n \to x$, $\xi_n \to 0$ holds
\[
|Z (x_n, t_n) - Z(x_n + \xi_n, t_n)| \le \omega (|\xi_n|),
\]
with the above inequality becoming equality in the limit $n \to \infty$. Equivalently,
\[
|\pax Z (x_n + \tilde \xi_n, t_n) \xi_n | \le \omega' (|\bar \xi_n|) \xi_n,
\]
where $\tilde \xi_n, \bar \xi_n \in [0, \xi_n]$ and the inequality becomes equality in the limit $n \to \infty$. Hence it is indeed a single-point blowup, because in the limit we get
\[
|\pax Z ( x, \tau) | = \omega' (0).
\]
This is impossible due to formula \eqref{omega'}. 

Now, let us provide a rigorous argument for this impossibility of the {single-point blowup scenario} that differs slightly from the original Lemma 3.4 of \cite{KNS}: Our choice \eqref{eq:tau} of $\tau$ and Lemma \ref{reduction} says that $Z(t)$ is smooth an $\eps$ beyond $\tau$. As a consequence, we can use formula \eqref{omega'} to get $|\pax Z (t)|_\infty < \omega' (0)$ for all $t \in [\tau, \tau + \eps]$. This sharp inequality and the fact that $\omega' (\xi)$ is continuous at $0$ implies that there exists such $\delta>0$ that $\sup_{t \in [\tau, \tau + \eps]} |\pax Z (t)|_\infty < \omega' (\delta)$. Hence for $t \in [\tau, \tau + \eps]$ and $|x-y| \le \delta$
\[
|Z (x, t) - Z(y, t )| \le |\pax Z (t)|_\infty |x- y| < \omega' (\delta) |x- y| \le  \omega( |x- y| ),
\]
where in the last inequality we have used the concavity and the mean value theorem. Consequently, the solution can loose the modulus of continuity only in the two-point blowup scenario \eqref{eq:twopt}.
\subsubsection{Towards moduli ruling out the two-point blowup scenario} Here, let us consider any $\omega \in \mathcal{O}$ smooth enough. We will add assumptions on $\omega$ useful for ruling out the two-point blowup scenario, thus approaching the more concrete shape of needed moduli of continuity. A comment on a technicality related to a (lack of) smoothness of the eventually constructed family of moduli of continuity will be presented in one of the following steps.

Without loss of generality we can assume $Z (x, \tau) \ge Z(y, \tau)$,  drop the absolute value in  \eqref{eq:twopt}, hence considering
\begin{equation}\label{eq:twopt'}
Z (x, \tau) - Z(y, \tau) = \omega (|x-y|).
\end{equation}
We have arrived at the heart of the regularity proof. To contradict  \eqref{ass:au}, it suffices to show that 
\begin{equation}\label{cd}
\partial_t (Z (x, t) - Z (y, t))_{|t=\tau} < 0,
\end{equation}
because then we arrive at contradiction with the definition \eqref{eq:tau} of $\tau$. 

Since we work at the exact time instant $\tau$, let us suppress the time dependence below.  Use the equation \eqref{eq:red} to write
\[
\pat (Z(x) -Z(y)) = \left[\Lambda Z (x) - \Lambda Z (y)   \right] +f \left[ \pax Z (x) Z (x) -  \pax Z(y) Z (y)\right] =:  I + f II.
\]
Due to translation-invariance of our problem, we can choose the reference point $0$ on $\So$ so that $x = \frac{\xi}{2}$, $y =- \frac{\xi}{2}$.
We write $\xi := x-y$. Via the differential formula in \eqref{1} and basic properties of the Poisson kernel, we arrive at
\begin{align*}
I \le \frac{1}{\pi} \int_0^\frac{\xi}{2} \frac{\omega(\xi + 2 \eta) +\omega(\xi - 2 \eta) -2 \omega (\xi) }{\eta^2} d \eta +  \frac{1}{\pi} \int_\frac{\xi}{2}^\infty \frac{\omega( 2 \eta + \xi) -\omega(2 \eta - \xi ) -2 \omega (\xi) }{\eta^2} d \eta.
\end{align*}
For more details of this computation, see \cite{KNS}, p. 222. Both terms above are nonpositive in view of concavity and nonnegativity of $\omega$. For $II$ we have
\begin{align*}
II = \pax Z (x) Z (x) -  \pax Z(y) Z (y)=  \frac{d}{dh}  \left( Z (x + h Z (x) ) - Z (y + h Z (y) ) \right)_{|h=0}  =:  \frac{d}{dh} g(h)_{|h=0}.
\end{align*}
Now, we use monotonicity of $\omega$ and \eqref{eq:twopt'} to compute $\frac{d}{dh} g(h)_{|h=0}$. Indeed,
\[
g(h) \le \omega (\xi + h | Z(x) - Z (y)| ) \le \omega (\xi + h \omega (\xi) ) \quad \text{ and } \quad g(0)  =  \omega (\xi).
\]
Consequently
\[
\frac{g(h) - g(0)}{h} \le \frac{ \omega (\xi + h \omega (\xi) )- \omega (\xi)}{h}  \implies II = \frac{d}{dh} g(h)_{|h=0} \le  \omega (\xi)  \omega' (\xi).
\]
Putting together estimates for $I$ and $II$ we obtain
\begin{align}
\pat (Z(x) -Z(y)) &\le 
 \frac{1}{\pi} \int_0^\frac{\xi}{2} \frac{\omega(\xi + 2 \eta) +\omega(\xi - 2 \eta) -2 \omega (\xi) }{\eta^2} d \eta\nonumber\\
 &+  \frac{1}{\pi} \int_\frac{\xi}{2}^\infty \frac{\omega( 2 \eta + \xi) -\omega( 2 \eta - \xi) -2 \omega (\xi) }{\eta^2} d \eta  + \Gamma \omega (\xi)  \omega' (\xi)=: I_1 + I_2 + I_3,\label{cd'}
\end{align}
with \begin{equation}\label{ass:wkb1}
\Gamma = \sup_{t \in [t_0, T]} f (t),
\end{equation}
recalling that  $f = f(t)$,
where $I_1, I_2$ are nonpositive. It remains to pinpoint  such $\omega \in \mathcal{O}$ that
 \begin{equation}\label{eq:iii22}
 I_1 + I_2 + I_3 < 0,
\end{equation}
since then \eqref{cd'} implies \eqref{cd}. 

Let us see what kind of dissipation (negative sign) may be provided by a typical modulus of continuity via $I_1$ and $I_2$. The formula behind  integral in $I_1$ resembles a second-order symmetric difference quotient. Therefore, using concavity of $\omega$, we write for $2 \eta \in [0, \xi]$
\[
\begin{aligned}
\omega (\xi + 2 \eta) - \omega(\xi) &\le  \omega'(\xi) 2 \eta , \\
\omega (\xi - 2 \eta) - \omega  (\xi) &= -\omega' (\xi) 2 \eta + \omega''(\theta) 2 \eta^2.
\end{aligned}
\]
Adding these formulas yields for $\theta \in [\xi - 2 \eta, \xi]$
\[
I_1 \le \frac{1}{\pi} \xi \omega'' (\theta).
\]
A typical modulus of continuity has increasing (and negative) second derivatives; then
 \begin{equation}\label{eq:iii22:p2}
I_1 \le \frac{1}{\pi} \xi \omega'' (\xi).
 \end{equation}
Let us focus on $I_2$.
Due to the concavity and nonnegativity of $\omega$, we have
\[\omega (2 \eta+ \xi ) -\omega (2 \eta - \xi) \le  \omega (2 \xi).
\]
If a nonnegative, strictly concave function $\omega$ satisfies
 \begin{equation}\label{eq:gam22}
 \omega (2 \xi) \le (1+ \gamma) \omega  (\xi) 
 \end{equation}
with $\gamma <1$ at some $\xi>0$, then
\[
I_2  \le - \frac{2 (1-\gamma)}{\pi} \frac{\omega (\xi) }{\xi}.
\]
Summing up,  for $\omega$ satisfying, in addition to previous assumptions,
\begin{itemize}
\item inequality \eqref{eq:gam22} (a typical behaviour of canonical nonnegative strictly concave functions off the origin),
\item that  its second derivatives are increasing,
\end{itemize} it holds
\[
 I_1 + I_2 + I_3 \le \frac{1}{\pi} \xi \omega'' (\xi) - \frac{2 (1-\gamma)}{\pi} \frac{\omega (\xi) }{\xi} + \Gamma \omega (\xi)  \omega' (\xi).
\]
Hence in order to have  \eqref{eq:iii22} we need for a large $\Gamma$
 \begin{equation}\label{eq:iii22:2}
 \xi \omega'' (\xi) - 2 (1-\gamma) \frac{\omega (\xi) }{\xi} + \pi \Gamma \omega (\xi)  \omega' (\xi) <0,
 \end{equation}
where we can throw out any of the two former summands, in view of nonnegativity of $I_1, I_2$. Throwing out $\xi \omega'' (\xi) $ lifts the need for the assumption that second derivatives of $\omega$ are increasing, whereas throwing out $- \frac{\omega (\xi) }{\xi}$ lifts the need for  \eqref{eq:gam22}.

 The simplest moduli of continuity are 
\[
\omega_0 (\xi) = \ln (a+ \xi)  - \ln a, \; a>0, \qquad \omega_1 (\xi) = \xi^\alpha, \ \alpha \in (0,1).
\]
Neither belongs to $\mathcal{O}$, since at $0$ $\omega''_0 > -\infty$, $\omega'_1 = \infty$. Let us consider then a slightly more complex
 \begin{equation}\label{eq:iii22:al}
\omega_2 (\xi) = \frac{\xi}{1+ K \xi^\alpha}, \; \alpha \in (0,1), \; K >0.
 \end{equation}
It fulfills all the requirements \emph{i.e.} $\omega_2 \in \mathcal{O}$, $\omega'''_2 \ge 0$, except for \eqref{eq:gam22} in vicinity of $0$. It means that we need here to throw out $- \frac{\omega (\xi) }{\xi}$ of \eqref{eq:iii22:2} and try to obtain 
 \begin{equation}\label{eq:iii22:3}
 \xi \omega_2'' (\xi) +  \pi \Gamma \omega_2 (\xi)  \omega_2' (\xi) <0
 \end{equation}
or equivalently
 \begin{equation}\label{eq:iii22:3b}
 (1 + K  (1-\alpha) \xi^\alpha) ( \pi \Gamma \xi^{1 - \alpha} - K \alpha) < K \alpha^2.
 \end{equation}
It holds for $\xi \le \xi_0 (\Gamma, K, \alpha)$. It means that choosing $\omega_2$ allows us to have  \eqref{eq:iii22:2} for $\xi \le \xi_0 (C, K, \alpha)$. 
 \begin{rem}\label{rem:emod}
One can conjecture the form \eqref{eq:iii22:al} of modulus of continuity, in relation to  \eqref{eq:iii22:2}, via integrating $ \xi \omega'' (\xi) + \pi \Gamma \omega (\xi)  \omega' (\xi) = 0$. See also Section \ref{sec:conc} for more details in context of relation to radially symmetric classical $2$d Smoluchowski-Poisson system.
 \end{rem}
 Let us focus now on the case $\xi \ge \xi_0 (C, K, \alpha)$. We will see shortly that there is in fact no difficulty to have  \eqref{eq:iii22:2}  for $\xi \ge \xi_0 (C, K, \alpha)$, provided  one allows to choose a modulus of continuity behaving differently in this regime. Then the eventual modulus will consist of two parts, where one needs to care about its smoothness at the point of the regime change. Having this in mind, observe that belonging to the family $ \mathcal{O}$ relies on the case $\xi \le \xi_0 (C, K, \alpha)$, hence for  $\xi \ge \xi_0 (C, K, \alpha)$ also the simplest moduli are admissible. Since $ K_1 \ln (K_2 \xi)$ fulfills  \eqref{eq:gam22} for $\gamma \ge \frac{\ln 2}{\ln (K_2 \xi_0)}$ and  $K_2 \xi_0 \ge 1$, one may make use of $- \frac{\omega (\xi) }{\xi}$ in  \eqref{eq:iii22:2}.  For small $K_1$'s, modulus $ K_1 \ln (K_2 \xi)$  suffices for
  \begin{equation}\label{eq:iii22:2D}
- 2 (1-\gamma) \frac{1 }{\xi} + \pi \Gamma \omega' (\xi) <0,
 \end{equation}
 hence for   \eqref{eq:iii22:2}. 
 
 All in all we see that a modulus of type
 \begin{equation}\label{eq:iii22:4}
\omega := \begin{cases} \frac{ \xi}{1 + K{ \xi}^\alpha} \quad &\text{ for } \xi \in [0, \xi_0), \\
 \ln (K_1 \xi) \quad &\text{ for } \xi \ge  \xi_0,  \\
\end{cases}
\end{equation}
 where $\alpha \in (0,1)$, provided it is smooth enough, is a reasonable choice to have  \eqref{eq:iii22:2}, thus \eqref{eq:iii22}. Let us fix $\alpha = \frac{1}{2}$. In the next sections we provide a precise choice of family of moduli, according to the just-presented motivation. 
 
 Let us only remark that knowing that one needs $\omega_2$ only for $ \xi \in [0, \xi_0)$, it is computationally more convenient to choose an equivalent modulus of type $\xi - K \xi^{1+\alpha}$, as in \cite{KNV}. We will stick however to the choice of type $\omega_2$, since it  additionally simplifies a nice insight into the comparison between our smoothness result and blowups of radially symmetric, classical Keller-Segel system in two dimensions -- see Section \ref{sec:conc}.
 
 \subsubsection{A concrete family of moduli}

For parameters $K, B, \xi_0$ let us define the family
\begin{equation}\label{eq:fam}
\omega_{K, B, \xi_0} (\xi) := \begin{cases} \frac{B \xi}{1 + K \sqrt{B \xi}} \quad &\text{ for } \xi \in [0, \xi_0), \\
C_{K,B, \xi_0} \ln (B\xi) \quad &\text{ for } \xi \ge  \xi_0,  \\
\end{cases}
\end{equation}
where 
\[
C_{K,B, \xi_0} = \frac{B \xi_0}{\ln (B \xi_0) ( 1+ K \sqrt{B \xi_0})},
\]
that implies continuity of $\omega_{K, B, \xi_0}$. This concrete family is motivated by  \eqref{eq:iii22:4}  and scaling $u_\lambda (x,t) = u (\lambda x, \lambda t)$ of the real-line case of \eqref{eq:red} with $f \equiv 1$. One has
\[
 \omega_{K, B, \xi_0}' (\xi)  = \begin{cases} B \frac{2 + K \sqrt{B \xi}}{2(1 + K \sqrt{B \xi})^2} \; &\text{ for } \xi \in [0, \xi_0), \\
C_{K,B, \xi_0} \xi^{-1} \; &\text{ for } \xi >  \xi_0,  \\
\end{cases} \qquad  \omega_{K, B, \xi_0}'' (\xi)  = \begin{cases} - K B^2 \frac{3 (B \xi)^{-\frac{1}{2}} + K}{4(1 + K \sqrt{B \xi})^3} \; &\text{ for } \xi \in [0, \xi_0), \\
- C_{K,B, \xi_0}  \xi^{-2} \; &\text{ for } \xi >  \xi_0.  \\
\end{cases}
\]
In particular, $\paxi \omega_{K, B, \xi_0} (0) = B$,  $\lim_{\xi \to 0} \paxi^2 \omega_{K, B, \xi_0} (\xi) = -\infty$.  To conclude that $\omega_{K, B, \xi_0} (\xi)  \in  \mathcal{O}$,  we need to show concavity of $\omega_{B}$. We have
\[
\omega_{K, B, \xi_0}' (\xi_0^-) \ge \omega_{K, B, \xi_0}' (\xi_0^+) \iff 2(1 +  K \sqrt{B \xi_0}) \le \ln(B \xi_0) (2 +  K \sqrt{B \xi_0})
\]
and for the latter it is sufficient to assume 
\begin{equation}\label{eqv20:1}
B \xi_0 \ge e^2.
\end{equation}
\subsubsection{Ruling out the two-point blowup scenario} 
Our aim is to choose parameters $K, B, \xi_0$ in \eqref{eq:fam} so that \eqref{eq:iii22} holds. 
As in \eqref{cd'} we obtain
\begin{equation}\label{cpl_ini3a}
\Gamma II \le I_3 = \Gamma \omega_{K, B, \xi_0} (\xi)  \omega_{K, B, \xi_0}' (\xi^-) \le \begin{cases} \Gamma B^2  \frac{ \xi (2 + K\sqrt{B \xi})}{2(1 + K \sqrt{B \xi})^3} \quad &\text{ for } \xi \in [0, \xi_0], \\
\Gamma C^2_{K, B, \xi_0} \xi^{-1} \ln (B \xi)  \quad &\text{ for } \xi >   \xi_0. \\
\end{cases}
\end{equation}
Observe adding case $\xi=\xi_0$ to the regime of  $\xi < \xi_0$ due to use of $ \omega (\xi^-)$. It may differ from $\omega (\xi^+)$ only in the spline point $\xi_0$, where  $ \omega (\xi^-) \ge  \omega (\xi^+)$ as long as \eqref{eqv20:1} holds.

Let us now consider the cases  $\xi \in [0, \xi_0)$ and  $\xi \ge  \xi_0$ separately.
\paragraph{(i) Case $\xi \in [0, \xi_0)$} We aim here at obtaining \eqref{eq:iii22:3}. Formula \eqref{eq:iii22:p2} yields
\[
I_1 \le \frac{1}{\pi} \xi \omega_{K, B, \xi_0}'' (\xi) = - \frac{1}{4 \pi}   K B^2   \xi \frac{3 (B \xi)^{-\frac{1}{2}} + K}{(1 + K \sqrt{B \xi})^3}.   
\]
This and \eqref{cpl_ini3a} implies that  \eqref{eq:iii22:3}  is equivalent to 
\[
K (3 (B \xi)^{-\frac{1}{2}} + K) > 2 \pi \Gamma (2 + K\sqrt{B \xi}) = 2 \pi  \Gamma  \sqrt{B \xi} (2 (B \xi)^{-\frac{1}{2}}  + K),
\]
for which in turn the condition
\begin{equation}\label{cpl_ini3}
K >  2 \pi  \Gamma  \sqrt{B \xi_0}
\end{equation}
is sufficient.

\paragraph{(ii) Case $ \xi \ge \xi_0$} We aim here at obtaining \eqref{eq:iii22:2D}. In fact, due to a possible jump at $\xi_0$, we need 
  \begin{equation}\label{eq:iii22:2Dn}
- 2 (1-\gamma) \frac{1 }{\xi} + \pi \Gamma \omega' (\xi^-) <0.
 \end{equation}
Using \eqref{eq:fam} and \eqref{eqv20:1}, we see that $\gamma = \frac{1}{2}$ is admissible in \eqref{eq:iii22:2Dn}.
Consequently, \eqref{eq:iii22:2Dn} is equivalent to $\pi \Gamma  < C_{K, B, \xi_0}^{-1}$ for $\xi > \xi_0$. In view of formula for $ C_{K, B, \xi_0}$ it is in turn equivalent to
\begin{equation}\label{cpl_ini4}
\Gamma \pi B \xi_0 < \ln (B \xi_0) (1+ K \sqrt{ B\xi_0}).
\end{equation}
For the case $ \xi = \xi_0$, \eqref{cpl_ini3} is again sufficient.

Summing up, we have gathered sufficient conditions so that \eqref{eq:iii22} holds for the family \eqref{eq:fam}. These are \eqref{eqv20:1},  \eqref{cpl_ini3}, \eqref{cpl_ini4}. Recall that having them fulfilled implies \eqref{cd}, thus contradicts \eqref{ass:au} as desired, provided the initial-data inequality \eqref{cpl_ev0} is valid. 

\subsubsection{Compliance with initial data} 
We are left with ensuring that \eqref{cpl_ev0} is kept via an appropriate choice of $K, B, \xi_0$. We have 
\[
|Z (x, t_0) - Z(y, t_0)| \le |\pax Z (t_0)|_\infty |x-y|.
\]
Take $\xi:= |x-y|$. Recalling our choice \eqref{eq:fam}, we see that in the case $\xi \in [0, \xi_0) $, for  \eqref{cpl_ev0} suffices
\begin{equation}\label{cpl_ini1}
\frac{B}{1 + K \sqrt{B \xi_0}} \ge |\pax Z (t_0)|_\infty.
\end{equation}
We split  the case $\xi \ge  \xi_0$ into $\xi \in [\xi_0, N \xi_0] $ and $\xi > N \xi_0$, where $N\geq1$ will be chosen later. Here, taking into account the form of $C_{K,B, \xi_0}$, for \eqref{cpl_ev0} 
suffices
\begin{equation}\label{cpl_ini2}
\begin{aligned}
\frac{B}{ ( 1+ K \sqrt{B \xi_0})}  > N  |\pax Z (t_0)|_\infty \quad &\text{ in the case } \xi \in [\xi_0, N \xi_0] \cr
\frac{B \xi_0}{ ( 1+ K \sqrt{B \xi_0})} \frac{\ln (B \xi)}{\ln (B \xi_0)}  >  2 |Z (t_0)|_\infty \quad &\text{ in the case } \xi > N \xi_0.
\end{aligned}
\end{equation}
It may happen that the half-period $L$ of the torus is smaller than $N \xi_0$ or even than $\xi_0$. Then, the corresponding conditions are not necessary.

\subsubsection{Conclusion of proof of  Lemma \ref{mainlemma}}  Summing up the previous two subsections, we need to choose such parameters $ K, B, \xi_0$ and $N$ (related to the splitting $N \xi_0$ in \eqref{cpl_ini2})  so that requirements  \eqref{eqv20:1},  \eqref{cpl_ini3}, \eqref{cpl_ini4},  \eqref{cpl_ini1}, \eqref{cpl_ini2} are satisfied. Let us first make the following choices for $\xi_0$ and $K$:
\[ B \xi_0 = e^2, \quad K =  4 \pi  \Gamma  \sqrt{B \xi_0} =  4 \pi  \Gamma e   , \]
so that  \eqref{eqv20:1} and \eqref{cpl_ini3} are fulfilled. We are left with free parameters $B$ and $N$ and we need to comply with \eqref{cpl_ini1}, \eqref{cpl_ini2} and \eqref{cpl_ini4}. Respectively, they take now the form
\[
\frac{B}{1 + 4 \pi  \Gamma e^2} \ge |\pax Z (t_0)|_\infty, 
\]
\[
\begin{aligned}
\frac{B}{ 1+ 4 \pi  \Gamma e^2} > N  |\pax Z (t_0)|_\infty  & \quad \text{ in the case } \xi \in [\xi_0, N \xi_0] \\
\frac{e^2}{ ( 1+  4 \pi  \Gamma e^2)} \ln (B \xi) > 2 |Z ( t_0)|_\infty & \quad \text{ in the case } \xi > N \xi_0
\end{aligned}
\]
and 
\[ \Gamma \pi e^2 <  2 (1+ 4 \pi  \Gamma  e^2). \]
The last one holds automatically. For the condition containing $|Z (t_0)|_\infty$, it suffices to choose $N \ge 1$ such that
\[\frac{1}{ ( 1+  4 \pi  \Gamma e^2)} \ln (N e^2) = |Z (t_0)|_\infty. \]
This choice fixes $N$. Finally we choose 
\begin{equation}\label{Bchoi}
B = 2N  |\pax Z (t_0)|_\infty  { ( 1+ 4 \pi  \Gamma e^2 )} 
\end{equation}
which suffices for the two  requirements involving $|\pax Z (t_0)|_\infty$. 

We have met all the conditions   \eqref{eqv20:1},  \eqref{cpl_ini3}, \eqref{cpl_ini4},  \eqref{cpl_ini1}, \eqref{cpl_ini2}. It means that for an arbitrary $Z(t_0) \in C^\infty (\So)$ we have found a modulus $\omega \in \mathcal{O}$ such that it is kept by $Z(t_0) \in C^\infty (\So)$ and is not violated for $t \ge t_0$ over the evolution. Hence Lemma  \ref{reduction} gives us the thesis of our main Lemma \ref{mainlemma}.\qed
\subsection{Conclusion of proof of Theorem \ref{th2}}
We have now all ingredients needed for our proof of Theorem \ref{th2}. Lemmas \ref{th:loc}, \ref{bur:analytic} imply that any initial datum in $H^s$, $s\ge1$ gives rise to a locally-in-time smooth solution. Lemma \ref{mainlemma} says that it can be continued for any $T< \infty$. Theorem \ref{th2} is proved.
\qed

\section{Proof of Theorem \ref{asymptotics}}\label{sec:th3}
 An $L^p$-estimate for $Z$ solving the Burgers equation \eqref{eq:red} implies an exponential decay. On the other hand, $ \pax Z \; (= (u-m)  e^{-m \chi t}) $ may a priori grow double-exponentially:  compare inequality \eqref{omega'}, where we use  $\omega'_{K, B, \xi_0} (0) = B$, with $B \sim  N \Gamma \sim e^{\Gamma} \Gamma $ via  \eqref{Bchoi}  and  $\Gamma  \sim e^{m \chi t}$ by definition.  Nevertheless, in the following we are able to show that $ (u-m)$ decays exponentially for sufficiently small $\chi m$.
 
Recall that we restrict ourselves here to the periodic torus $[-\pi, \pi]$. As before, we denote the full Hilbert norm as $| f |_{H^k} = | f |_k$ with $k < \infty$, where $k=0$ stands for the $L^2$ norm, and $| f |_\infty$ is reserved for the supremum norm.  Since we will work in what follows with homogeneous Hilbert norms, for clarity, we will write them as $| \Lambda^k f| _0$.

 For zero mean functions, we will need the Poincar\'e inequality
\begin{equation}\label{eqdecayP}
|f|_0^2\leq |\Lambda^\frac{1}{2} f|_0^2
\end{equation}
and  the interpolation inequalities
\begin{equation}\label{eqdecayI}
|f|_{L^4}^4\leq C_I |f|_0^2|\Lambda^\frac{1}{2} f|_0^2, 
\end{equation}
\begin{equation}\label{eqdecayI2}
 | f|^2_\infty \leq 2  | f|_0  |\Lambda f|_0.
\end{equation}
Recall that by assumption we have a fixed number $\chi m <1$. $C$ is a generic constant that may vary between lines.

\subsection{Zero-order decays}

Testing equation \eqref{1_ks'} with $W$, we obtain 
$$
\frac{1}{2}\frac{d}{dt}|W (t) |_0^2\leq \chi m |W(t)|_0^2-|\Lambda^\frac{1}{2} W(t)|_0^2.
$$
Hence
\begin{equation}\label{eqdecay}
|W(t)|_0 \leq e^{\left(-1+\chi m\right)t}   |W(0)|_0,
\end{equation}
where we have used the Poincar\'e inequality \eqref{eqdecayP}.
Using  the assumption $\chi m<1$, we can conclude from  \eqref{eqdecay} that
\begin{equation}\label{eqdecay20}
\int_0^\infty |\Lambda^\frac{1}{2} W(s)|_0^2ds\leq C.
\end{equation}

Let us denote by  $f(x^*)=\max_{x \in \So} f (x)$. Using the formula \eqref{1sin} for $\Lambda$ we obtain
$$
\Lambda f(x^*)=\frac{1}{2\pi}\text{P.V.}\int_{-\pi}^\pi \frac{f(x^*)-f(x^*-y)}{\sin^2\left(y/2\right)}dy\geq f(x^*),
$$
for  a zero mean function $f$, since $\sin^2 \le 1$.
This inequality and tracking the spatial maximum of $W(t)$, \emph{i.e.} $W(x^*_t, t)$ via  \eqref{1_ks'}, we obtain\footnote{For more details of this procedure, including its rigorousness, compare \cite{AGM, BG, GO}.} the $L^\infty$-decay
$$
|W(t)|_{L^\infty}\leq e^{\left(-1+\chi m\right)t} |W(0)|_{L^\infty}.
$$ 
The $L^2$ and  $L^\infty$ decays allow us to choose $1\ll T^*$ such that
\begin{equation}\label{eqdecayT}
\sup_{t\geq T^*}\chi\left(|W(t)|_{0}+|W(t)|_{L^\infty}\right)\leq   \frac{1 - \chi m}{2}
\end{equation}
 for any $t\geq T^*$.
\subsection{Half-order decays}
Testing \eqref{1_ks'} with $\Lambda W$, one obtains
$$
\frac{1}{2}\frac{d}{dt}|\Lambda^\frac{1}{2} W(t)|_{0}^2\leq |\Lambda W(t)|_{0}^2(-1+\chi |W(t)|_{L^\infty})+\chi m|\Lambda^\frac{1}{2} W(t)|_{0}^2,
$$
where we used the fact that $\Lambda = \pax H$ and the $L^2$-isometry of the Hilbert transform $H$ for periodic, zero mean functions. 
This estimate, together with \eqref{eqdecay20} and the choice \eqref{eqdecayT} 
gives
$$
\sup_{t\geq T^*}|\Lambda^\frac{1}{2} W(t)|_{0}^2 + \int_{T^*}^\infty |\Lambda W(s)|_0^2ds\leq C
$$
hence
\begin{equation}\label{eqdecay2}
 \int_0^\infty |u(s)-m|_{0}^2ds = \int_0^\infty |\Lambda W(s)|_{0}^2ds \leq C,
\end{equation}
where the equality follows from $\pax W = u-m$, see \eqref{smallW} and the inequality for $t \le T^*$ -- from Theorem \ref{th2}.

The $L^2$-estimate for the Keller-Segel problem  \eqref{1_ks_u} - \eqref{1_ks_v} implies
\begin{align*}
\frac{1}{2}\frac{d}{dt}|u(t) |_{0}^2& \le  -|\Lambda^\frac{1}{2} u(t)|_{0}^2+\frac{ \chi}{2} |u(t)|_{L^3}^3
\leq -|\Lambda^\frac{1}{2} u(t)|_{0}^2+\frac{\chi}{2}|u(t)|_{L^2}|u(t)|_{L^4}^2\\
&\leq -|\Lambda^\frac{1}{2} u(t)|_{0}^2+\frac{C_I \chi}{2}|u(t)|^2_{0}|\Lambda^\frac{1}{2} u(t)|_{0},
\end{align*}
where for the last inequality we used the interpolation \eqref{eqdecayI}. Hence Young's and Gronwall's inequalities, together with the bound \eqref{eqdecay2}, yield
$$
\sup_{t \ge T^*}|u(t)|_{0} \leq C
$$ 
and consequently, using for  $t \le T^*$ Theorem \ref{th},
\begin{equation}\label{eqdecay3}
\sup_{t \in \er }|u(t)|_{0} +  \int_0^\infty |\Lambda^\frac{1}{2}u(s)|_{0}^2ds\leq C.
\end{equation}
\subsection{First-order decays}
Testing equation \eqref{1_ks_u} with $\Lambda u$, we obtain
$$
\frac{1}{2}\frac{d}{dt}|\Lambda^\frac{1}{2} u(t)|_{0}^2\leq -|\Lambda u(t)|_{0}^2-\chi\int_{-\pi}^\pi (\partial_x u\partial_x v\Lambda u)(t)+\chi\int_{-\pi}^\pi  (u(u-m)\Lambda u)(t).
$$
For the middle term on the right-hand side, we use the inequality
$$
-\chi\int_{-\pi}^\pi  (\partial_x u\partial_x v\Lambda u)(t)\leq \chi |\Lambda u(t)|_{0}^2|\partial_x v(t)|_{L^\infty}
$$
in  tandem with
\begin{equation}\label{eqdecay31}
|\partial_x v|_{L^\infty}=|W|_{L^\infty}\leq \frac{1}{2 \chi},
\end{equation}
compare \eqref{eqdecayT}.

For the last term we write
$$
\chi\int_{-\pi}^\pi  (u(u-m)\Lambda u)(t)=-\chi m|\Lambda^\frac{1}{2} u(t) |_{0}^2+ \chi |\Lambda u (t) |_{0}|u (t)|^2_{L^4} \le \frac{1}{4} |\Lambda u (t) |_{0}^2 + C_I \chi (\chi |u(t)|^2_{0} + m) |\Lambda^\frac{1}{2} u(t)|^2_{0},
$$
where for the first inequality we used the interpolation \eqref{eqdecayI} and for the second one the Poincar\'e inequality \eqref{eqdecayP}.

Together, we obtain
$$
\frac{1}{2}\frac{d}{dt}|\Lambda^\frac{1}{2}u (t) |_{0}^2\leq -\frac{1}{4} |\Lambda u (t) |_{0}^2 + C |\Lambda^\frac{1}{2}u (t) |_{0}^2 (1+  |\Lambda^\frac{1}{2}u (t) |_{0}^2).
$$
This, with aid of \eqref{eqdecay3}, implies, similarly as before
\begin{equation}\label{eqdecay21}
\sup_{t \in  \er}|\Lambda^\frac{1}{2}u (t) |_{0} + \int_0^\infty   |\Lambda u (s) |_{0}^2 ds \leq C.
\end{equation}
Using $\pax W = u-m$ and Theorem \ref{th2} for $t \le T^*$, we also get
\begin{equation}\label{eqdecayZ}
\sup_{t \in \er}| W (t) |_\frac{3}{2} \leq C.
\end{equation}
\subsection{Second-order decays}
Testing equation \eqref{1_ks_u} with $\Lambda^2 u$, we obtain
$$
\frac{1}{2}\frac{d}{dt}|\Lambda u(t)|_{0}^2\leq -|\Lambda^\frac{3}{2} u(t)|_{0}^2-\chi\int_{-\pi}^\pi (\partial_x u\partial_x v\Lambda^2 u)(t)+\chi\int_{-\pi}^\pi  (u(u-m)\Lambda^2 u)(t).
$$
For the middle term on the right-hand side, using again \eqref{eqdecay31}, we write
\begin{multline}
-\chi\int_{-\pi}^\pi  (\Lambda^\frac{1}{2} ( \partial_x u\partial_x v )\Lambda^\frac{3}{2} u)(t)\leq \chi |\Lambda^\frac{3}{2} u(t)|_{0}^2 \left(|\partial_x v(t)|_{L^\infty} + \frac{1}{4 \chi} \right) + \chi^2 |(\partial_x u\partial_x \Lambda^\frac{1}{2}  v)(t)|^2_0 \\
 \le \frac{3}{4}  |\Lambda^\frac{3}{2} u(t)|_{0}^2 + C \chi^2 |\partial_x u (t) |^2_0, 
\end{multline}
where for the last inequality we used $|\pax \Lambda^\frac{1}{2}  v (t)|^2_\infty \le C$ via \eqref{eqdecayZ}.

For the last term on the right-hand side it holds
$$
\chi\int_{-\pi}^\pi  (u(u-m)\Lambda^2 u)(t)  =- \chi m|\Lambda u(t) |_{0}^2 + \chi \int_{-\pi}^\pi ( \Lambda^\frac{1}{2}  (u^2) \Lambda^\frac{3}{2} u)(t)    \le \frac{1}{8}  |\Lambda^\frac{3}{2} u(t)|_{0}^2 +
C \chi^2 |\Lambda^\frac{1}{2}u(t)|^2_{0} |u(t)|^2_{L^\infty}.$$
Altogether, using also interpolation \eqref{eqdecayI2}, we obtain
\[
\frac{d}{dt}|\Lambda u(t)|_{0}^2 + \frac{1}{4} |\Lambda^\frac{3}{2} u(t)|_{0}^2 \le C \chi^2 (|\partial_x u (t) |^2_0 +  |\Lambda^\frac{1}{2}u  (t)|^{2}_{0}   | u  (t)|_0  |\Lambda u  (t)|_0  ).
\]
Therefore, using \eqref{eqdecay21}, we arrive at
\begin{equation}\label{eqdecay41}
\sup_{t \in  \er}|\Lambda u (t) |_{0} + \int_0^\infty   |\Lambda^2 u (s) |_{0}^2 ds \leq C.
\end{equation}
Hence
\begin{equation}\label{eqdecayZ2}
\sup_{t \in \er}| W (t) |_2 \leq C.
\end{equation}

\subsection{Asymptotics}

Using interpolation in Sobolev spaces and \eqref{eqdecay} with \eqref{eqdecayZ2}  one has for any $t>0$, $\alpha <1$ 
$$
|(u-m)(t)|_{\alpha} = | W (t)|_{1+\alpha} \leq C |W (t) |^{\frac{1-\alpha}{2}}_{0}|W(t) |_2^{\frac{1+\alpha}{2}} \le \Sigma e^{ (1-\alpha) \left(-1+\chi m\right) t}.
$$
for a finite $\Sigma$.
\qed


 \section{Concluding remarks}\label{sec:conc}
We have showed that the critical fractal Keller-Segel problem in the periodic setting admits global-in-time, smooth solutions. This closes an open question, posed in \cite{BouCal} and present earlier in \cite{Escudero}, at least in the periodic setting. Apart from this, we believe that both
\begin{itemize}
\item the observation that solutions to parabolic-elliptic Keller-Segel system are given as derivatives of the solutions to a Burgers-type equation, and
\item the generalization of \cite{KNS} methodology over equations with no scaling 
\end{itemize}
may be useful for further studies. 
Additionally, we obtained the steady-state asymptotics of solutions to \eqref{1_ks_u} -  \eqref{1_ks_v}, as long as quantity $\chi m$ is small enough. 

In the remainder of this final section we compare the fractional Keller-Segel system in our periodic setting  with the real-line case. Next, we mention some interesting relations between the radially symmetric, classical two-dimensional Keller-Segel system (that gives rise to blowups for large masses) and our system. Finally we list three open questions related to   \eqref{1_ks_u} -  \eqref{1_ks_v}.

\subsection{Comparison with equations on $\er$.} 
\subsubsection{Role of $m$}
In the real-line case it is common to consider
\begin{equation}\label{1_ks_v_r}
0= \pax^2 v + u \quad  \text{ in } \;(0, T) \times  \er
\end{equation}
as the equation for $v$, compare Escudero \cite{Escudero} and Bournaveas \& Calvez \cite{BouCal}. In fact, the proper periodic counterpart of \eqref{1_ks_v_r} is our  \eqref{1_ks_v}  and not \eqref{1_ks_v_r} on $\So$. Let us explain this matter. 

Firstly, when one considers a family of problems \eqref{1_ks_u} -   \eqref{1_ks_v} on periodic tori $[-L, L]$, all with the same total initial mass $\int_{-L}^L u_0 (x) dx$ and takes $L \to \infty$, then \eqref{1_ks_v}  goes (formally) to \eqref{1_ks_v_r}. Hence \eqref{1_ks_v} is a legitimate periodic counterpart of \eqref{1_ks_v_r}. 

Moreover, if we had dropped $m$ in \eqref{1_ks_v} (in the periodic case), integration by parts in the resulting Poisson equation would force $\dashint_{\Ss^1} u(x, t)dx = 0$. Since the applications call for nonnegative $u_0$ and $u$, the dynamics would become trivial. This is not the case on the real line, so we may consider the simplest possible \eqref{1_ks_v_r} there.

Finally, equation \eqref{1_ks_v} can be seen as the elliptic simplification of
\begin{equation}\label{1_ks_v_p}
\varepsilon \pat v= \pax^2 v + u -  m,
\end{equation}
appearing for instance in \cite{BurCie} by Burczak,  Cie\'slak \& Morales-Rodrigo. Observe that \eqref{1_ks_v_p}  formally yields $\dashint_\So v (x, t) dx \equiv 0$ by the mass conservation. Taking limit $\varepsilon \to 0$ motivates our choice of  \eqref{1_ks_v} as well as our zero-mean disambiguation \eqref{zeroF}.
\subsubsection{Possibility of an analogue of Theorem \ref{th} on $\er$}
Since the original blowup conjecture of \cite{BouCal} concerns the real-line case, it would be natural to provide an analogue of Theorem \ref{th} on $\er$, \emph{i.e.} for \eqref{1_ks_u} - \eqref{1_ks_v_r}. To this end one needs however to deal with the following matter, at least within our approach. Nonnegativity of initial datum $u_0$ implies that the direct analogue for respective fractional-Burgers-initial-datum is non-integrable, compare \eqref{bur:choi} with $m \equiv 0$. This is merely a technical matter that we deal with in our incoming paper.

 \subsection{ Relation to the radially symmetric, classical two-dimensional Keller-Segel system}
Let us consider the classical Keller-Segel system in two-dimensions on $(0, T) \times \Omega$
 \[
\begin{aligned}
\pat u  - \Delta u &= - \chi \nabla \cdot (u \nabla v), \\
- \Delta v &=  u.
\end{aligned}
\]
Under assumption of radial symmetry (it suffices to have radially symmetric domain and data to conserve radial symmetry over the evolution, due to rotation invariance), it is common to introduce a one-dimensional primitive-type equations. Let us focus on the case of $\Omega$ being the unit disc $B$, since we are dealing in this paper with bounded domains. The cumulative mass $Q (r,t) = \frac{1}{2 \pi} \int_{|x| \le r} u (x,t) d x$ follows
\[
\pat Q  - \partial_r^2 Q + r^{-1}  \partial_r Q  = \chi  r^{-1}  Q \partial_r Q.
\]
A more striking resemblance to our case can be seen for the new variable $S (r,t) = Q (\sqrt{r},t)$. It is governed by
\[
\pat S  - 4 r \partial_r^2 S  =  2\chi S  \partial_r  S .
\]
Firstly, the above equation produces blowups for certain initial data with supercritical mass $S_0(1) > \frac{4}{\chi}$, so, apparently, near the origin the dissipation $r \partial_r^2$ is weaker than $\Lambda u$ of \eqref{eq:red}. More importantly, the identity for stationary solutions\footnote{Non-trivial stationary solutions exist for any subcritical or critical mass here, in contrary to the full-space case, where they exist only for critical masses. For this and more details on the results presented in this subsection, compare papers by Biler, Karch, Lauren\c{c}ot \& Nadzieja:  \cite{BKLNdisc} for disc as a domain and \cite{BKLNplane} for the full-space case.}  $S_\infty$
\[
 r \partial_r^2 S_\infty   +  \frac{\chi}{2} S_\infty   \partial_r  S_\infty  = 0
\]
coincides with inequality \eqref{eq:iii22:3} up to constants and yields
\[
S_\infty (r) = \frac{4}{\chi} \frac{r}{ \frac{4 }{\chi S_0(1) }-1 + r}.
\]
For subcritical masses $S_0(1) < \frac{4}{\chi}$ it takes the form of the $\alpha =1$ endpoint of the modulus of continuity $\omega_2$, recall \eqref{eq:iii22:al} and Remark \ref{rem:emod}. However, observe that such $\alpha =1$ endpoint of $\omega_2$ does not belong to the class $\mathcal{O}$.
 \subsection{Open questions}
Concerning the system \eqref{1_ks_u} -  \eqref{1_ks_v}, we have three questions in mind. 

Firstly, it would be interesting to know whether the size restriction on $\chi m$ in Theorem \ref{asymptotics} is merely a technicality or there is nontrivial infinite-time dynamics for large initial masses. Preliminarily, we are in favor of the latter case.

Secondly, the results of \cite{DKSV} on smoothness in the finer scale of fractional dissipations can be automatically transferred to our case. An interesting open matter in this context may be to analyze a possibility of the threshold smoothness/blowup dissipation (or threshold size of the initial data) in this finer scale.
 
Finally, what is probably of main interest, is  to generalize our result over wider class of Keller-Segel systems, especially over the doubly parabolic case. This is a subject of our current research.

 \subsection*{Acknowledgement}
 JB thanks Tomasz Cie\'slak and Grzegorz Karch for fruitful discussions.


\bibliographystyle{abbrv}

\end{document}